\theoremstyle{plain}
\newtheorem*{utheorem}{Theorem}
\newtheorem*{ucorollary}{Corollary}
\newtheorem{theorem}{Theorem}[section]
\newtheorem{proposition}[theorem]{Proposition}
\newtheorem{lemma}[theorem]{Lemma}
\theoremstyle{definition}
\newtheorem{definition}[theorem]{Definition}
\newtheorem{remark}[theorem]{Remark}
\newtheorem{remarks}[theorem]{Remarks}
\newtheorem{example}[theorem]{Example}
\newtheorem{question}[theorem]{Question}
\newtheorem*{remark*}{Remark}
\newtheorem*{ack}{Acknowledgment}
\numberwithin{equation}{section}
\numberwithin{table}{section}
\definecolor{purple}{rgb}{0.4,0.2,0.4}
\def\cha{\mathrm{char}\ }
\DeclareMathOperator{\Ann}{Ann}
\def\Hilb{\mathrm{Hilb}}
\def\Soc{\mathrm{Soc}}
\def\coloneqq{:=}
\def\<{\left<}
\def\>{\right>}
\def\Gl{\mathrm{Gl}}
\def\A{\mathcal{A}}
\def\C{\mathbb{C}}
\def\F{{\sf k}}
\def\Sym{\mathrm{Sym}}
\def\Spec{\mathrm{Spec}}
\def\ns{\footnotesize \it}
\def\Z{\mathbb{Z}}
\newcommand{\maxA}{\ensuremath{{\mathfrak{m}_{\mathcal A}}}}
\definecolor{med-gray}{gray}{0.5}
\definecolor{gray1}{gray}{0.87}
\definecolor{gray2}{gray}{0.74}
\definecolor{gray3}{gray}{0.64}
\definecolor{gray4}{gray}{0.48}
\definecolor{verylight-yellow}{rgb}{1,1,0.7}
\definecolor{yellow}{rgb}{1,1,0.2}
\definecolor{vivid-blue}{rgb}{0.2,0,1}
\definecolor{light-pink}{rgb}{1,0.8,1}
\definecolor{med-pink}{rgb}{1,0.6,1}
\definecolor{aqua}{rgb}{0.0, 1.0, 1.0}
\definecolor{light-gray}{rgb}{0.5, 0.9, 0.5}
\def\cha{\mathrm{char}\ }
\title{Free extensions and Jordan type\footnote{\textbf{Keywords}: Artinian algebra, coinvariant, deformation, free extension, Hilbert function, invariant, Jordan type, Lefschetz property, tensor product. \textbf{2010 Mathematics Subject Classification}: Primary: 13E10;  Secondary: 13A50, 13D40, 13H10, 14B07, 14C05}}
\author{Anthony Iarrobino\\[.05in]
{\ns Department of Mathematics, Northeastern University, Boston, MA 02115,
 USA.
}\\[.2in] Pedro Macias Marques\\[.05in]
{\ns Departamento de Matem\'{a}tica, Escola de Ci\^{e}ncias e Tecnologia, Centro de Investiga\c{c}\~{a}o}\\[-.05in]
{\ns  em Matem\'{a}tica e Aplica\c{c}\~{o}es, Instituto de Investiga\c{c}\~{a}o e Forma\c{c}\~{a}o Avan\c{c}ada,}\\[-.05in]
{\ns Universidade de \'{E}vora, Rua Rom\~{a}o Ramalho, 59, P--7000--671 \'{E}vora, Portugal.}
\\[.2in]
Chris McDaniel\\[0.05in]
{\ns Department of Mathematics, Endicott College, Beverly, MA 01915, USA.}
}
\date{May 21, 2019, revised October 23, 2019}
\begin{document}
\maketitle
\begin{abstract}
Free extensions of graded Artinian algebras were introduced by T.~Harima and J.~Watanabe, and were shown to preserve the strong Lefschetz property. The Jordan type of a multiplication map $m$ by a nilpotent element of an Artinian algebra is the partition determining the sizes of the blocks in a Jordan matrix for $m$. We show that a free extension $C$ of the Artinian algebra $A$ with fiber $B$  is a deformation of the usual tensor product. This has consequences for the generic Jordan types  of $A,B$ and $C$: we show that the Jordan type of $C$ is at least that of the usual tensor product in the dominance order (Theorem~\ref{TPthm}).  In particular this gives a different proof of the T.~Harima and J.~Watanabe result concerning the strong Lefschetz property of a free extension. Examples illustrate that a non-strong-Lefschetz graded Gorenstein algebra $A$ with non-unimodal Hilbert function may nevertheless have a non-homogeneous element with strong Lefschetz Jordan type, and may have an $A$-free extension that is strong Lefschetz.\par
 We apply these results to algebras of relative coinvariants of linear group actions on a polynomial ring.
 \end{abstract}
\section{Introduction} 
Let $\mathcal{A}=(\mathcal{A},\mathfrak{m},\sf k)$ be a local Artinian algebra over a field ${\sf k}=\mathcal A/\mathfrak {m}$ where $\mathfrak m$ is the unique maximum ideal.  The Jordan type of an element $\ell\in\mathfrak{m}$ is the partition $P_\ell$ with parts equal to the sizes of the Jordan blocks of its associated (nilpotent) multiplication map $\times\ell\colon \mathcal{A}\rightarrow \mathcal{A}$.  The generic Jordan type of $\mathcal{A}$ is defined to be the largest partition $P_{\mathcal{A}}=P_\ell$, with respect to the dominance order on partitions, which occurs among the elements $\ell\in\mathfrak{m}$.  It is known that the Jordan types $P_\ell$ are bounded above by the conjugate partitions $H(\mathcal{A})^\vee$ of their associated Hilbert functions $H(\A)$.  We say that an element $\ell\in\mathfrak{m}$ has strong Lefschetz Jordan type (SLJT) if its Jordan type achieves this bound, i.e. if $P_\ell=H(\mathcal{A})^\vee$.  We say that the algebra $\mathcal{A}$ has strong Lefschetz Jordan type if it admits a SLJT element.\par
 Our graded algebras $A$ (we use roman letters to distinguish this case) may or may not be \emph{standard graded}:  that is, generated by $A_1$ over $A_0=\sf k$.
When $A$ is standard graded one typically restricts to linear forms $\ell\in A_1$, and computes the generic linear Jordan type of $A$, $P_A$, as the largest Jordan type which occurs among the linear forms $\ell\in A_1$.  We say that a linear form $\ell\in A_1$ is strong Lefschetz (SL) if $P_\ell=H(A)^\vee$, the conjugate partition of the Hilbert function of $A$ corresponding to the grading on $A$. This Hilbert function $H(A)$ may in general be different from the Hilbert function $H(\mathcal{A})$, with respect to the filtration by powers of the maximal ideal $\mathfrak m_\A=\oplus_{k=1}^\infty A_k$, where $\mathcal A$ denotes the related local algebra. The strong Lefschetz property of a linear form $\ell\in A_1$ is equivalent to the familiar condition that the multiplication maps $\times\ell^k\colon A_i\rightarrow A_{i+k}$ have maximal rank for all integers $i,k$.  We say that the graded algebra $A$ is strong Lefschetz (SL) if it admits a strong Lefschetz linear element.  \par
Let $A$, $B$, and $C$ be graded Artinian $\sf k$-algebras (not necessarily standard-graded).  We say that $C$ is a free extension of $A$ with fiber $B$ if there is a map of graded algebras $\iota\colon A\rightarrow C$ making $C$ into a free $A$-module and there is an isomorphism of graded algebras $B\cong C/(\iota(\mathfrak{m}_A))$.  The tensor product algebra $A\otimes_{\sf k}B$ is a free extension of $A$ with fiber $B$, and one of our principal results is that a free extension may be regarded as a deformation of the tensor product (Theorem \ref{thm:def}).

We use this to show a main goal of this note, that the generic linear Jordan type of a free extension is bounded below by that of its associated tensor product algebra: 

\begin{utheorem}(Theorem \ref{TPthm} below)
	Let $A$, $B$, and $C$ be graded Artinian algebras over an infinite field $\sf k$, and suppose that $C$ is a free extension of $A$ with fiber $B$.  Let $P_{A\otimes_{\sf k} B}$ be the generic linear Jordan type of the tensor product algebra, and let $P_C$ be the generic linear Jordan type of the $A$-free extension $C$.  Then in the dominance partial order we have 
	$$P_C\geq P_{A\otimes_{\sf k} B}.$$
\end{utheorem}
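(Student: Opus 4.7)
The plan is to realize $C$ as a flat one-parameter deformation whose generic fiber is $C$ itself and whose special fiber is the tensor product $A\otimes_{\sf k} B$, and then to invoke the semicontinuity of Jordan type under flat specialization. The abstract and introduction already announce this viewpoint, so the bulk of the technical work is to construct the deformation carefully and then translate flatness into a dominance comparison of generic linear Jordan types.

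For the construction, I would filter $C$ by $F^k := \iota(\mathfrak{m}_A)^k C$. Freeness of $C$ over $A$ together with the isomorphism $B \cong C/\iota(\mathfrak{m}_A)C$ produces a graded $\sf k$-linear isomorphism $A\otimes_{\sf k} B \cong C$ via $a\otimes b \mapsto \iota(a)\, s(b)$ for any graded section $s\colon B\to C$, and the associated graded algebra of $C$ with respect to $F^\bullet$ carries the multiplication of the tensor-product algebra $A\otimes_{\sf k} B$. A Rees-type construction then yields a flat ${\sf k}[t]$-algebra $\widetilde{C}$ with $\widetilde{C}/(t) \cong A\otimes_{\sf k} B$ and $\widetilde{C}/(t-1) \cong C$, flatness being a consequence of the $A$-freeness of $C$ and the constancy of the Hilbert function across the family.

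Within this trivialization, the degree-one piece $(A\otimes_{\sf k} B)_1 = A_1 \oplus B_1$ is naturally identified with $C_1$ independently of $t$. For any $\ell$ in this fixed $\sf k$-vector space, multiplication by $\ell$ in the fiber over $t$ gives a flat family of nilpotent operators $\mu_t$ on a fixed underlying vector space; each $\rk(\mu_t^i)$ is then lower-semicontinuous in $t$, whence $\rk(\mu_1^i) \geq \rk(\mu_0^i)$ for all $i$. Using $P_{\mu_t}^\vee(i) = \rk(\mu_t^{i-1}) - \rk(\mu_t^i)$ and the equivalence between $P \geq P'$ in dominance and rank-domination on every power of the associated nilpotent operator, this upgrades to $P_{\ell,C} \geq P_{\ell,A\otimes_{\sf k} B}$. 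Choosing $\ell$ to be a generic linear form of $A\otimes_{\sf k} B$ that realizes $P_{A\otimes_{\sf k} B}$, and recalling that $P_C$ is itself a dominance-maximum over $C_1$, yields the desired inequality $P_C \geq P_{A\otimes_{\sf k} B}$.

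The step I expect to be most delicate is setting up the flat deformation itself: one must verify that the Rees/filtration construction is genuinely flat over ${\sf k}[t]$ (this is where $A$-freeness of $C$ is essential) and is compatible with the internal grading on $A$, $B$, $C$, so that the degree-one pieces are honestly identified across the fibers and a linear form on one side corresponds to a linear form on the other. After that is in place, the semicontinuity of rank and its dictionary with the dominance order on Jordan partitions is standard linear algebra and requires no new input.
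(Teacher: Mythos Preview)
Your strategy---realize $C$ as a one-parameter degeneration to $A\otimes_{\sf k}B$ and then invoke semicontinuity of Jordan type---is exactly the strategy the paper uses. The paper's proof of Theorem~\ref{TPthm} cites a preparatory Lemma (Lemma~2.1) which builds, for a fixed linear form $\ell=\ell_A\otimes 1+1\otimes\ell_B$, an explicit family of nilpotent operators $L_t\colon C\to C$ such that $L_0$ is conjugate to $\times\ell$ on $A\otimes_{\sf k}B$ and $L_t$ (for $t\neq 0$) is conjugate to multiplication by a genuine linear form $\ell_t=\iota(\ell_A)+t\Lambda\in C_1$; the theorem then follows immediately from semicontinuity, as you say.

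Where the two arguments differ is in how the deformation is produced. The paper does not use a Rees algebra or the filtration $F^k=\iota(\mathfrak m_A)^kC$ at all; instead it chooses a section $s\colon B\to C$ built from a Jordan basis of $B$ with respect to $\ell_B$, then defines the family $L_t$ by hand via scaling maps $\psi_t,\chi_t$ that multiply the $A$-component by $t^{\deg(a)}$ (the \emph{internal} $A$-degree). Your Rees construction is more conceptual, but your particular choice of filtration needs care: the associated graded of $C$ with respect to $F^k=\iota(\mathfrak m_A)^kC$ is $\mathrm{gr}_{\mathfrak m_A}(A)\otimes_{\sf k}B$, and while this coincides with $A\otimes_{\sf k}B$ when $A$ is standard graded, the paper explicitly allows non-standard gradings (cf.\ Example~\ref{ex:G333}). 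The fix is easy---filter instead by the $A$-grading pushed forward through a chosen section, i.e.\ set $F^k=\bigoplus_{d\ge k}\iota(A_d)\cdot s(B)$---and this is essentially what the paper's scaling by $t^{\deg(a)}$ accomplishes. With that adjustment your argument goes through, and the remaining delicate point you flag (that degree-one pieces, hence linear forms, are identified across the family) is precisely what the commuting diagrams in the paper's Lemma~2.1 verify.
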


This theorem can be viewed as a generalization of a well known result of T. Harima and J. Watanabe, which we derive as a corollary. For a graded algebra $j_A$ is the largest degree $i$ for which $A_i\not=0$.
\begin{ucorollary}(Theorem \ref{SL2thm} below)\cite[Theorem 6.1]{HW}
	Let the Artinian algebra $C$ be a free extension of $A$ with fiber $B$. Assume that $\cha {\sf k}=0$ or $\cha {\sf k}> j_A+j_B$, that the Hilbert functions of both $A$ and $B$ are symmetric, and that both $A$ and $B$ are strong Lefschetz. Then $C$ is also strong Lefschetz. 
\end{ucorollary}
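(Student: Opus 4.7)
The plan is to derive the corollary in three short steps from Theorem~\ref{TPthm} together with the classical fact that the strong Lefschetz property is preserved under tensor products of graded Artinian algebras with symmetric Hilbert functions, in suitable characteristic. In effect, Theorem~\ref{TPthm} does all of the genuinely new work connecting the free extension $C$ to the tensor product $A\otimes_{\sf k} B$; the corollary will follow by combining that inequality with the classical SL tensor product theorem and the obvious upper bound $P_\ell\leq H^\vee(\mathcal{A})$.

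The first step is to record two Hilbert-function observations. Since $C$ is a free $A$-module with fibre $B$, the Hilbert function is multiplicative: $H(C)=H(A)\cdot H(B)=H(A\otimes_{\sf k} B)$, so in particular $H^\vee(C)=H^\vee(A\otimes_{\sf k} B)$. Since both $H(A)$ and $H(B)$ are symmetric, so is their product, of socle degree $j_A+j_B$, which is the parameter that enters the characteristic hypothesis.

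The second step is to invoke the classical tensor product theorem for strong Lefschetz. If $\ell_A\in A_1$ and $\ell_B\in B_1$ are SL elements and the Hilbert functions of $A$ and $B$ are symmetric, then under the hypothesis $\cha {\sf k}=0$ or $\cha {\sf k}\geq j_A+j_B$ the element $\ell_A\otimes 1+1\otimes\ell_B$ is strong Lefschetz on $A\otimes_{\sf k} B$. The standard proof goes by using the $\mathfrak{sl}_2$-representation afforded to $A$ (respectively $B$) by the pair consisting of an SL element and the symmetric Hilbert function; the diagonal action realises $\ell_A\otimes 1+1\otimes\ell_B$ as the raising operator on the $\mathfrak{sl}_2$-module $A\otimes_{\sf k} B$, which decomposes completely into irreducibles of dimension at most $j_A+j_B+1$, whence the Jordan type equals $H^\vee$. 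Consequently $P_{A\otimes_{\sf k} B}=H^\vee(A\otimes_{\sf k} B)$.

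The third step is to chain the inequalities. By Theorem~\ref{TPthm} we have $P_C\geq P_{A\otimes_{\sf k} B}=H^\vee(A\otimes_{\sf k} B)=H^\vee(C)$, and the Jordan type of any element is bounded above by $H^\vee(C)$. Hence $P_C=H^\vee(C)$, and $C$ is strong Lefschetz. The only place the characteristic hypothesis is used is in the SL tensor product theorem invoked in the second step; no further obstacle arises, since Theorem~\ref{TPthm} already supplies the required comparison between a free extension and its associated tensor product.
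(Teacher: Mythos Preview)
Your proof is correct and follows essentially the same approach as the paper's own proof of Theorem~\ref{SL2thm}: invoke the Clebsch--Gordan/tensor product SL theorem to get $P_{A\otimes_{\sf k}B}=H^\vee(A\otimes_{\sf k}B)$, use the free-extension structure to identify $H^\vee(C)=H^\vee(A\otimes_{\sf k}B)$, and then sandwich $P_C$ between $P_{A\otimes_{\sf k}B}$ (via Theorem~\ref{TPthm}) and $H^\vee(C)$ (via Proposition~\ref{prop:HFBound}). The only cosmetic difference is that you spell out the $\mathfrak{sl}_2$ justification for the tensor product step, whereas the paper just cites it.
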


The invariant theory of finite groups abounds with free extensions, and the above corollary can be used to show that coinvariant algebras associated to certain finite reflection groups are SL.  On the other hand, we also find examples of free extensions $C$ over $A$ with fiber $B$ where the inequality in the theorem is strict, i.e. $P_C>P_{A\otimes_{\sf k}B}$.  Example \ref{SL2counterex} shows that this strict inequality can even occur if the gradings on $A$, $B$, and $C$ are all standard, answering a question of J. Watanabe (private communication).  For further study of relative coinvariant rings from this viewpoint see \cite{McDCI}.

\subsection{Jordan Type and Strong Lefschetz}
Let $\mathcal{A}=\left(\mathcal{A},\mathfrak{m},\sf k\right)$ be a local Artinian algebra over a field $\sf k$.  Then for any element $\ell\in\mathfrak{m}$ the multiplication map \begin{equation}
\label{eq:MultMap}
\times\ell\colon \mathcal{A}\rightarrow \mathcal{A}
\end{equation} 
is a nilpotent linear transformation of a finite dimensional vector space $\mathcal{A}$.  Define the \emph{Jordan type of $\ell$} to be the partition $P_\ell$ with parts equal to the block sizes of the Jordan canonical form of the multiplication map \eqref{eq:MultMap}. 

We recall the \emph{dominance order} on partitions.
	Let $P=(p_1,\ldots ,p_s)$ and $ P'=(p'_1,\ldots, p'_t)$ with $p_1\ge \cdots \ge p_s$ and
	$p'_1\ge\cdots \ge pÕ_t$. Then
	\begin{equation}\label{dominanceeq} 
	P\le P' \text { if  for all } i \text { we have} \sum_{k=1}^i p_k\le\sum_{k=1}^i pÕ_k.
	\end{equation}
Thus, $(2,2,1,1)<(3,2,1)$ but $(3,3,3)$ and $(4,2,2,1)$ are incomparable.\vskip 0.2cm
Given a partition $P=(p_1,\ldots,p_r)$, define its conjugate partition by 
\[P^\vee=(p_1^\vee,\ldots,p_s^\vee), \ \ p_i^\vee=\#\left\{k\mid p_k\geq i\right\}.\]
A partition can be visualized by a Ferrers diagram, and its conjugate partition is then the Ferrers diagram with the rows and columns interchanged.
 
The following Proposition leads to the definition of \emph{generic Jordan type} of $\mathcal{A}$: see \cite[\S 2.2, Lemma 2.8]{IMM1} concerning the semicontinuity of Jordan type. Recall that in a local Artinian algebra $\A$, the maximal ideal $\mathfrak m$ is an affine space. We say that the graded algebra $A$ has \emph{maximal socle degree $j=j_A$} or \emph{formal dimension $j$} (as in \cite{MS}) when $A_j\not=0$ but $ A_i=0$ for $i>j$.\footnote{We follow \cite[Remark 2.11]{H-W} in this definition. When $A$ is not standard graded, we take $\mathfrak m_A=\oplus_{i\ge 1}A_i$; then $j_A$ might not agree with the largest $j$ such that $(\mathfrak m_A)^j\not=0$.}

\begin{proposition}
	\label{propdef:GenJorType}
	Assume that ${\sf k}$ is an infinite field. In the local (resp. graded) case, there exists a non-empty Zariski open dense set $U\subset\mathfrak{m}$ (respectively $U\subset A_1$) such that for each pair of elements $\ell\in U$, $\ell'\in\mathfrak{m}$, (of $\ell \in U, \ell^\prime\in A_1$) we have $P_{\ell'}\leq P_\ell$. 
\end{proposition}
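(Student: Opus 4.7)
The plan is to reduce the statement to a semicontinuity argument for ranks of powers of the multiplication maps, which then follows from a standard determinantal/Zariski-open argument.

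First I would translate the dominance condition on Jordan types into a condition on ranks. Conjugation of partitions reverses dominance: $P_\ell \leq P_{\ell'}$ if and only if $P_\ell^\vee \geq P_{\ell'}^\vee$. Using the identity
$$\sum_{k=1}^{i} p_k^\vee \;=\; \sum_{j}\min(p_j,i) \;=\; \dim\ker (\times\ell)^{i},$$
valid because each Jordan block of $\times\ell$ of size $p_j$ contributes exactly $\min(p_j,i)$ to $\dim\ker(\times\ell)^i$, the comparison $P_\ell^\vee\geq P_{\ell'}^\vee$ is equivalent to
$$\dim\ker(\times\ell)^{i}\;\geq\;\dim\ker(\times\ell')^{i}\quad\text{for all } i\geq 1,$$
or equivalently $\operatorname{rank}(\times\ell)^{i} \leq \operatorname{rank}(\times\ell')^{i}$ for all $i\geq 1$. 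Thus the proposition reduces to finding a Zariski open dense $U$ on which every rank $\operatorname{rank}(\times\ell)^i$ is simultaneously maximized.

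Next I would parametrize. Choose a basis of the finite-dimensional $\sf k$-vector space $\mathcal A$; then for $\ell\in\mathfrak m$ the multiplication $\times\ell$ is represented by a matrix $M_\ell$ whose entries are $\sf k$-linear functions of the coordinates of $\ell$ in a chosen basis of $\mathfrak m$ (respectively, of $A_1$ in the graded case). Consequently, the entries of each power $M_\ell^i$ are polynomial functions of those coordinates. For each fixed integer $r$, the condition $\operatorname{rank}(M_\ell^i)\geq r$ is the non-vanishing of at least one $r\times r$ minor of $M_\ell^i$, hence defines a Zariski open subset of the affine space $\mathfrak m$ (or $A_1$). Setting $r_i^{\max}:=\max_{\ell\in\mathfrak m}\operatorname{rank}(M_\ell^i)$, the locus
$$U_i\;=\;\bigl\{\ell\in\mathfrak m : \operatorname{rank}(M_\ell^i)=r_i^{\max}\bigr\}$$
is nonempty and Zariski open, and therefore (provided $\sf k$ is infinite, as will be assumed throughout the paper) dense in the irreducible affine space $\mathfrak m$.

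Finally I would intersect. Only finitely many exponents $i$ matter: if $N=\dim_{\sf k}\mathcal A$ then $(\times\ell)^N=0$ for every $\ell\in\mathfrak m$ because $\ell$ is nilpotent, so $U:=U_1\cap\cdots\cap U_N$ is a finite intersection of open dense subsets and thus still open dense and nonempty. For any $\ell\in U$ and any $\ell'\in\mathfrak m$ one has $\operatorname{rank}(M_{\ell'}^i)\leq r_i^{\max}=\operatorname{rank}(M_\ell^i)$ for every $i$, so by the translation in the first step $P_{\ell'}\leq P_\ell$. The graded case is identical upon replacing $\mathfrak m$ with the affine space $A_1$. The only subtle point, and the only place one has to be careful, is the dominance-versus-conjugation equivalence in Step 1; everything else is routine upper-semicontinuity of rank on an affine variety defined by determinantal conditions.
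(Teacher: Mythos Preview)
Your argument is correct. The paper does not give a self-contained proof of this proposition; it simply refers the reader to \cite[\S 2.2, Lemma~2.8]{IMM1} for the semicontinuity of Jordan type, and then states the proposition without further justification. What you have written is exactly the standard semicontinuity argument that underlies that reference: translate dominance of Jordan types into the ranks of powers $(\times\ell)^i$ via the conjugate-partition identity $\sum_{k\le i} p_k^\vee=\dim\ker(\times\ell)^i$, observe that each rank condition is Zariski open by minors, and intersect finitely many such opens. Your handling of the finiteness (only $i\le \dim_{\sf k}\mathcal A$ matter) and of the infinite-field hypothesis (needed so that nonempty Zariski opens in the affine space $\mathfrak m$ or $A_1$ are dense and have nonempty finite intersection at the level of ${\sf k}$-points) is accurate and matches the implicit assumptions used later in the paper, e.g.\ in Theorem~\ref{TPthm}.
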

\begin{definition}\label{genericJTdef}  The \emph{generic Jordan type} of $\mathcal{A}$, $P_\mathcal{A}$ (resp. the generic linear Jordan type of $A$, $P_A$) is the Jordan type $P_\ell$ for any element $\ell\in U$, the open dense Zariski set of Proposition~\ref{propdef:GenJorType}
\end{definition}

Recall that the Hilbert function for a local Artinian algebra is the tuple of integers 
\begin{equation}
\label{eq:HFLocal}
H(\mathcal{A})=(h_0,h_1,h_2,\ldots,h_{j_\A}), \ \ h_i=\dim_{\sf k}\left(\mathfrak{m}^i/\mathfrak{m}^{i+1}\right).
\end{equation}

If $\mathcal{A}=A$ is a graded algebra, it comes with its own Hilbert function
\begin{equation}
\label{eq:HFGraded}
H(A)=(h_0,h_1,h_2,\ldots,h_{j}), \ \ h_i=\dim_{\sf k}(A_i).
\end{equation} 

Note that the Hilbert function defines a partition (after possible rearrangement).  We write $H(\mathcal{A})^\vee$ or $H(A)^\vee$ to mean the conjugate partition of the Hilbert function.

For a proof of the following we refer the reader to \cite[Theorem 2.23]{IMM1}.
\begin{proposition}
	\label{prop:HFBound}
	The Jordan type of any element $\ell\in\mathfrak{m}$ is bounded above by the conjugate partition of the Hilbert function of $\mathcal{A}$, i.e. 
	$$P_\ell\leq H(\mathcal{A})^\vee.$$
	If $\mathcal{A}=A$ is graded, and $\ell\in A_1$, then the Jordan type is bounded by the conjugate of the graded Hilbert function, i.e.
	$$P_\ell\leq H(A)^\vee.$$
\end{proposition}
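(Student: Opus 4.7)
The plan is to reduce both the local and graded cases to a single combinatorial inequality, via a Jordan chain decomposition of $\mathcal{A}$ adapted to the filtration, followed by a bar-counting argument. Since ${\sf k}[\ell]\subset \mathrm{End}_{\sf k}(\mathcal{A})$ is a PID and $\mathcal{A}$ is a finite-dimensional torsion ${\sf k}[\ell]$-module, the structure theorem furnishes a decomposition
\[
\mathcal{A} \;=\; \bigoplus_{\alpha} V_\alpha, \qquad V_\alpha \;=\; {\sf k}[\ell]\cdot v_\alpha,
\]
into cyclic summands of lengths $p_\alpha$, each with ${\sf k}$-basis $\{\ell^i v_\alpha : 0\leq i<p_\alpha\}$. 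By definition $P_\ell$ is the weakly decreasing rearrangement of $(p_\alpha)_\alpha$.

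The crucial step is to arrange this decomposition so that the induced basis of $\mathcal{A}$ is compatible with the filtration. In the graded case $\ell\in A_1$, the graded analogue of the structure theorem gives $A\cong \bigoplus_\alpha {\sf k}[\ell](-d_\alpha)/(\ell^{p_\alpha})$, and each $\ell^i v_\alpha$ is automatically homogeneous of degree $d_\alpha+i$. In the local case one retains the ungraded decomposition but selects each generator $v_\alpha$ inductively, of maximal $\mathfrak{m}$-adic order within its cyclic summand, to obtain a basis compatible with the $\mathfrak{m}$-adic filtration. Because $\ell\cdot\mathfrak{m}^k\subset \mathfrak{m}^{k+1}$, the orders $e_\alpha^i := \mathrm{ord}(\ell^i v_\alpha)\geq d_\alpha+i$ along a single chain are strictly increasing in $i$, so each chain contributes at most one basis element of any given order, and compatibility then amounts to
\[
h_k \;=\; \#\bigl\{(\alpha,i): e_\alpha^i = k\bigr\} \quad\text{for every } k.
\]

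With such a decomposition in hand the inequality is immediate. Ordering the chains so that $p_1\geq p_2\geq\cdots$, for every $m\geq 1$,
\[
\sum_{j=1}^m p_j \;=\; \#\bigl\{(j,i):1\leq j\leq m,\ 0\leq i<p_j\bigr\} \;=\; \sum_k \#\bigl\{(j,i):j\leq m,\ e_j^i=k\bigr\} \;\leq\; \sum_k \min(m,h_k),
\]
since each inner count is at most $m$ (one element per chain, by strict monotonicity of orders) and at most $h_k$ (total basis elements of order $k$). Using $\sum_k\min(m,h_k)=\sum_{k=1}^m (H^\vee)_k$ yields the dominance inequality $P_\ell\leq H^\vee(\mathcal{A})$, and the identical argument in the graded setting gives $P_\ell\leq H^\vee(A)$.

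The main obstacle is the filtration-compatible chain decomposition in the local case: an arbitrary Jordan basis need not respect the $\mathfrak{m}$-adic filtration, so the inductive maximization of generator orders, or equivalently a filtered refinement of the ${\sf k}[\ell]$-structure theorem, is where the real work lies. In the graded case this step is automatic from the graded structure theorem, and everything else in the argument is purely combinatorial.
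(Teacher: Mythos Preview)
The paper does not give its own proof of this proposition; it simply refers to \cite[Theorem~2.23]{IMM1}. So there is no argument in the paper to compare yours against, and the question is whether your sketch stands on its own.

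Your graded case is complete and correct: the graded structure theorem over $\F[\ell]$ gives homogeneous cyclic generators, so each Jordan string contributes exactly one basis element per degree it occupies, and the bar-counting identity $\sum_k\min(m,h_k)=\sum_{i=1}^m(H^\vee)_i$ finishes the dominance inequality cleanly.

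The local case, however, has a real gap, and you have correctly located it. The phrase ``selects each generator $v_\alpha$ inductively, of maximal $\mathfrak{m}$-adic order within its cyclic summand'' does not describe a procedure: once a cyclic summand is fixed, its generator is determined up to unit, so its order is fixed too. What you actually need is to choose the \emph{decomposition itself} so that the initial forms $\mathrm{in}(\ell^i v_\alpha)\in\mathrm{gr}_{\mathfrak m}(\mathcal A)$ are linearly independent; equivalently, a Jordan basis adapted to the $\mathfrak m$-adic filtration. That such a basis exists is true, but it is a theorem (this is essentially what \cite[Theorem~2.23]{IMM1} establishes, via strings), not a consequence of ``maximizing orders.'' Without it your inequality $\#\{(j,i):j\le m,\ e_j^i=k\}\le h_k$ is unjustified: for an arbitrary basis of $\mathcal A$ the number of elements of a given $\mathfrak m$-adic order can well exceed the corresponding Hilbert value, since distinct basis elements may have linearly dependent initial forms. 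So your outline reduces the proposition to exactly the statement the paper cites, rather than proving it; the combinatorial half is fine, but the filtered structure theorem you invoke is the substance of the result.
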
 

We say that $\mathcal{A}$ has SLJT (respectively, $A$ is SL) if its generic Jordan type (respectively, generic linear Jordan type) achieves this bound, i.e. 
$$P_{\mathcal{A}}=H(\mathcal{A})^\vee, \ \ \text{resp.} \ \ P_A=H(A)^\vee.$$ 

One can show, e.g. \cite[Proposition 2.37]{IMM1}, that in the graded case, $A$ is strong Lefschetz if and only if there exists a linear form $\ell\in A_1$ for which the multiplication maps $\times\ell^k\colon A_i\rightarrow A_{i+k}$ have full rank for all integers $i,k$.  Moreover, one can further show that if the Hilbert function $H(A)$ is symmetric, i.e. $h_i=h_{j_A-i}$ for each $i$, then SL is in turn equivalent to the condition that the multiplication maps $\times\ell^{j_A-2i}\colon A_i\rightarrow A_{j_A-i}$ are isomorphisms for each $i$.

The following result is well known. For a graded algebra with $A_0\cong \sf k$ we set $\mathfrak m_A=\oplus_{i\ge 1}A_i$. We denote by ${\sf k}\{x,y\}$ the regular local ring in variables $x,y$ over the field $\sf k$.
\begin{lemma}[Height two Artinian algebras are strong Lefschetz]\label{heighttwolem} Let $A={\sf k}[x,y]/I$ be standard Artinian  graded of socle degree $j$, or $\mathcal A={\sf k}\{x,y\}/I$ be local Artinian of socle degree $j=j_\A$,  and suppose $\cha {\sf k}=0$ or $\cha {\sf k}> j$. Let $\ell$ be a general element of $\mathfrak m_A$ in the first case, or of $\maxA$ in the second. Then $\ell$ has SLJT and $A$ is SL (or $\mathcal A$ has SLJT).
\end{lemma}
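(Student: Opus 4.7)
The plan is to reduce the local case to the graded case via the associated graded algebra, then handle the graded case using Macaulay's inverse systems.

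For the local case $\A = {\sf k}\{x,y\}/I$, let $\mathrm{gr}_{\mathfrak m}(\A)$ denote the associated graded algebra, which is a standard graded quotient of ${\sf k}[x,y]$ with the same Hilbert function as $\A$. A generic $\ell \in \mathfrak{m}_\A$ has nonzero initial form $\bar\ell \in \mathrm{gr}_{\mathfrak m}(\A)_1$, and $\bar\ell$ is a generic linear element of $\mathrm{gr}_{\mathfrak m}(\A)$. A standard semicontinuity argument under the $\mathfrak m$-adic filtration (in the spirit of \cite{IMM1}) shows that $P_\ell$ weakly dominates $P_{\bar\ell}$ in dominance order. Combined with Proposition~\ref{prop:HFBound}, which bounds both above by $H^\vee(\A) = H^\vee(\mathrm{gr}_{\mathfrak m}\A)$, this reduces SLJT for $\A$ to SL for the graded algebra $\mathrm{gr}_{\mathfrak m}(\A)$.

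For the graded case, write $A = R/I$ with $R = {\sf k}[x,y]$. By Macaulay's inverse systems, $I = \mathrm{Ann}(M)$ for a finitely generated graded $R$-submodule $M$ of ${\sf k}[X,Y]$ (acting by contraction), with $H(M) = H(A)$. For $\ell \in R_1$, the multiplication $\times \ell^k \colon A_i \to A_{i+k}$ is ${\sf k}$-linearly dual to the contraction $\circ \ell^k \colon M_{i+k} \to M_i$, so they have equal rank. The SL property for $A$ with respect to a generic $\ell$ is therefore equivalent to the claim that $\circ \ell^k \colon M_{i+k} \to M_i$ attains the maximum rank $\min(h_i, h_{i+k})$ for all $(i,k)$ with $i+k \leq j$.

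The heart of the argument, and the main obstacle, is this max rank claim. In the ambient space ${\sf k}[X,Y]_d$ of dimension $d+1$, for generic $\ell \in R_1$ the contraction $\circ \ell^k$ has kernel of dimension $k$ and image of dimension $d-k+1$; the hypothesis $\cha{\sf k}=0$ or $\cha{\sf k}\geq j$ ensures that the relevant divided power / binomial coefficients are nonzero through degree $j$. Combining $h_d \leq d+1$ with $\mathrm{im}(\circ \ell^k|_{M_d}) \subseteq M_{d-k}$ gives the a priori bounds
\[
\max(0,\, h_d - h_{d-k}) \,\leq\, \dim\bigl(M_d \cap \ker(\circ \ell^k)\bigr) \,\leq\, \min(h_d,\, k).
\]
The classical codimension-two fact is that for generic $\ell \in R_1$ the lower bound is attained. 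A typical proof first checks directly (using the characteristic hypothesis to rule out vanishing coefficients) that every cyclic $R$-submodule of ${\sf k}[X,Y]$ is SL, and then extends this to arbitrary finitely generated graded submodules $M$ by induction on the number of generators via short exact sequence arguments. The codimension-two hypothesis is essential here: the analogous statement fails already in three variables, where even Gorenstein height-three Artinian algebras can fail to be strong Lefschetz.
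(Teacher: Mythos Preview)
Your approach differs from the paper's, which gives no argument beyond citing Brian\c{c}on's standard-basis results for ideals in ${\sf k}\{x,y\}$. Your reduction of the local case to the graded case via the associated graded algebra and the semicontinuity inequality $P_\ell \geq P_{\bar\ell}$ is correct and worth recording, though Brian\c{c}on's work already treats the local case directly.

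The gap is in your graded-case argument. The passage to Macaulay duality is fine, and cyclic inverse systems do correspond to height-two complete intersections, which are indeed SL. But the step extending SL from cyclic to arbitrary finitely generated $M$ ``via short exact sequence arguments'' does not work as stated. Given $0 \to M' \to M \to M'' \to 0$ with $\circ\,\ell^k$ of maximal rank on both $M'$ and $M''$, the snake lemma yields injectivity on $M$ only when both outer maps are injective, and surjectivity only when both are surjective; the inequality $h_d(M)\le h_{d-k}(M)$ does not force the corresponding inequalities for $M'$ and $M''$ simultaneously, so the mixed case (one outer map injective, the other surjective) genuinely arises and the induction breaks down. Some specifically two-variable input is required at this point, and your sketch does not supply it. The route the paper invokes is to pass to generic coordinates and use the standard-basis (equivalently, generic-initial-ideal) structure: the monomials outside $\mathrm{in}(I)$ form a Borel-fixed staircase from which one reads off an explicit Jordan basis whose block sizes equal $H^\vee(A)$, and semicontinuity under the Gr\"obner degeneration then transfers this to $A$ itself.
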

\begin{proof} These statements follow readily from the openness of the set of directions in which there is a standard basis for ideals in the local ring $\mathbb C\{x,y\}$ \cite[Theorem I.2.1]{Bri}, that extends to the case $\cha {\sf k}=p> j$ (see \cite[Theorem 2.16]{BaI}).
\end{proof}\par

\subsection{Free extensions}\label{groupactionsubsec}
  
\begin{definition}
	\label{def:freeext}\par\noindent
{\bf A.}	Given graded Artinian algebras $A$, $B$, and $C$, we say that $C$ is a free extension of $A$ with fiber $B$ if there exist graded algebra homomorphisms $\iota\colon A\rightarrow C$ and $\pi\colon C\rightarrow B$ for which
	\begin{enumerate}
		\item $\iota$ is injective and makes $C$ into a free $A$-module, 
		\item $\pi$ is surjective and $\ker(\pi)=\iota(\mathfrak{m}_A)\cdot C$.
	\end{enumerate}
Equivalently, $C$ is a free extension of $A$ with fiber $B$ if $\pi$ is surjective and for some (equivalently, every) $\sf k$-linear section of $\pi$, say ${\sf s}\colon B\rightarrow C$, the map $\Phi_{\sf s}\colon A\otimes_{\sf k}B \rightarrow C$ is an isomorphism of $A$-modules.\par\noindent
{\bf B.} Geometrically, as we shall show (Theorem \ref{thm:def}), a free extension $C$ is a flat deformation of a finite (commutative associative) algebra $B$ over a local algebra $(\mathcal A,\mathfrak m,{\sf k})$: so $C$ is an $A$-algebra such that the associated map $\gamma: \Spec(C)\to \Spec(\mathcal A)$ is finite and flat, and with a closed embedding $\pi: \Spec(B) \to \Spec(C)$ inducing a cartesian diagram:\vskip 0.2cm
\qquad\qquad\qquad\qquad\xymatrix{{\Spec(B)}\ar[d]\ar@{^{(}->}^\pi[r]&{\Spec(C)}\ar[d]^\gamma\\
{\Spec ({\sf k})}\ar@{^{(}->}[r]&{\Spec(\mathcal A)}.}

\end{definition}

\begin{remarks}
	\label{rem:FExt}
	\begin{enumerate}
		\item Thus, a free extension $C$ can be regarded as a deformed tensor product of $A$ and $B$, with the deformation occurring in the $B$-factor.  Note that $C$ is isomorphic to the tensor product $A\otimes_{\sf k}B$ as \emph{algebras} if and only if the $\sf k$-linear section ${\sf s}\colon B\rightarrow C$ can be chosen as a map of (graded) $\sf k$-algebras.
		
		\item A related notion is that of coexact sequences which show up in the J.~C.~Moore related topology literature \cite{Bau,MoSm1,Sm1}.  Given a sequence of graded Artinian algebras $A(n)$ $n\in{\mathbb Z}$, we say that a sequence of maps $$\xymatrix{\cdots A({n-1})\ar[r]^-{f_{n-1}} & A(n)\ar[r]^-{f_n} & A({n+1}) \cdots}$$ is \emph{coexact} at $A(n)$ if we have $\ker(f_n)=f_{n-1}(\mathfrak{m}_{A(n-1)})\cdot A(n)$.  The sequence is coexact if it is coexact at $A(n)$ for every $n$. Then $C$ is a free extension of $A$ with fiber $B$ if and only if the sequence 
		\begin{equation}\label{coexactseq}
		\xymatrix{{\sf k}\ar[r] & A\ar[r]^-\iota & C\ar[r]^-\pi & B\ar[r] & {\sf k}}
		\end{equation} is coexact and $\iota\colon A\rightarrow C$ makes $C$ into a free $A$-module.
		
		\item Here is yet another criterion for free extension:  $C$ is a free extension of $A$ with fiber $B$ if and only if $\pi$ is surjective, $\ker(\pi)=\iota(\mathfrak{m}_A)\cdot C$ and $\dim_{\sf k}(C)=\dim_{\sf k}(A)\cdot\dim_{\sf k}(B)$.
	\end{enumerate}
\end{remarks}

\section{Free extension as a deformation, and Jordan type}\label{groupactionsec}
  
We first will show that an $A$-free extension $C$ with fiber $B$ is a deformation of the algebra $A\otimes B$ (Theorem \ref{thm:def}). We will conclude that the generic Jordan type of $C$ is always greater than or equal to that of the actual tensor product $A\otimes B$ (Theorem \ref{SL2thm}).
Let $A$, $B$, and $C$ be graded Artinia $\F$ algebras with homogeneous maximal ideals $\mathfrak{m}_A$, $\mathfrak{m}_B$ and $\mathfrak{m}_C$, respectively.  Suppose that $C$ is a free extension over $A$ with fiber $B$, meaning that there exist algebra homomorphisms $\iota\colon A\rightarrow C$ making $C$ into a free $A$ module and $\pi\colon C\rightarrow B$ surjective with $\ker(\pi)=\mathfrak{m}_AC$ (Definition \ref{def:freeext}). We thank the referee for suggesting we show the following stronger result than we had originally shown and outlining the proof given in Remark~\ref{rem:Rees}  using Rees algebras. We give two different proofs, with the thought that some readers may prefer one. Our first proof was inspired by the discussion in \cite[\S 15.8 and Theorem 15.17]{Ei}, which was pointed out by the referee. 
\begin{theorem}
	\label{thm:def}
	Assume that $\F$ is an infinite field and that $C$ is a free extension over $A$ with fiber $B$. Then there exists a flat $\F[t]$-algebra $\mathfrak{C}$ with fibers $\mathfrak{C}_c\coloneqq \mathfrak{C}/(t-c)\mathfrak{C}$ over $c\in\F$ satisfying 
	\begin{enumerate}
		\item $\mathfrak{C}_0\coloneqq \mathfrak{C}/(t\cdot\mathfrak{C})\cong A\otimes_{\F}B$ as $A$-algebras, and 
		\item $\mathfrak{C}_1\coloneqq \mathfrak{C}/((t-1)\cdot\mathfrak{C})\cong C$ as $A$-algebras, 
		\item for every non-zero $ c\in\F$, we have $\mathfrak{C}_c\cong \mathfrak{C}_1$ as $A$-algebras.
	\end{enumerate}
In other words, $C$ is a flat deformation of $A\otimes_{\F}B$ as an $A$-algebra.
\end{theorem} 
\begin{proof}
	Let $b_1,\ldots,b_r\in B$ be a set of homogeneous elements that generate $B$ as a $\F$-algebra: then $B$ has a homogeneous $\F$-linear basis consisting of some monomials in $b_1,\ldots,b_r$.  Let $c_1,\ldots,c_r\in C$ be any homogeneous $\pi$-lifts of $b_1,\ldots,b_r$, respectively.  Then since $C$ is a free $A$-module with fiber $B$, $C$ must have an $A$-linear basis consisting of some monomials in $c_1,\ldots,c_r$; in particular, the elements $c_1,\ldots,c_r$ must generate $C$ as an $A$-algebra.  Define the surjective $A$-algebra homomorphism 
	$$\phi\colon A[y_1,\ldots,y_r]\rightarrow C, \ \ \phi(y_i)=c_i,$$
	and let $I=\ker(\phi)\subset A[y_1,\ldots,y_r]$.  Since $A$ is Artinian, it is also Noetherian, hence so is $A[y_1,\ldots,y_r]$.  Thus $I$ is finitely generated, say $I=(g_1,\ldots,g_r)$.  Since $\phi$ preserves the grading (if we set $\deg(y_i)=\deg(b_i)$), then we can take the generators $g_1,\ldots,g_r$ to be homogeneous in $A[y_1,\ldots,y_r]$.  Then in $A[y_1,\ldots,y_r,t]$ define the polynomials
	\[g_i^{(t)}\coloneqq t^{\deg(g_i)}\cdot g_i\left(t^{-\deg(y_1)}y_1,\ldots,t^{-\deg(y_r)}y_r\right)\]
	and set 
	$$\mathfrak{C}=\frac{A[y_1,\ldots,y_r,t]}{\bigl(g_1^{(t)},\ldots,g_s^{(t)}\bigr)}.$$ 
	Note that for each $c\in F$ we have 
	$$\mathfrak{C}_c=\mathfrak{C}/(t-c)\mathfrak{C}=\frac{A[y_1,\ldots,y_r,t]}{\bigl(g_1^{(t)},\ldots,g_s^{(t)},t-c\bigr)}\cong\frac{A[y_1,\ldots,y_r]}{\bigl(g_1^{(c)},\ldots,g_s^{(c)}\bigr)}.$$
	In particular, for $c=1$ we have $g_i^{(1)}=g_i$ and hence 
	$$\mathfrak{C}_1\cong \frac{A[y_1,\ldots,y_r]}{(g_1,\ldots,g_s)}\cong C,$$
	which shows that (2) holds.
	
	Note also that for each non-zero $b\in \F$ we have a $\F$-algebra isomorphism 
	$$\xymatrixrowsep{.5pc}\xymatrix{\theta_b\colon A[y_1,\ldots,y_r,t]\ar[r] & A[y_1,\ldots,y_r,t]\\
	a\ar@{|->}[r] & a \\
	y_i\ar@{|->}[r] & b^{\deg(y_i)}y_i\\
	t\ar@{|->}[r] & bt\\}$$ 
	which satisfies $\theta_b(g_i^{(t)})=b^{\deg(g_i)}\cdot g_i^{(t)}$, and $\theta_b(t-1)=b(t-\frac{1}{b})$.  Hence for a non-zero $c\in\F$,  the map $\theta_{c^{-1}}$ passes to an $A$-algebra isomorphism on the quotients
	$$\theta_{c^{-1}}\colon\mathfrak{C}_1= \frac{A[y_1,\ldots,y_r,t]}{\bigl(g_1^{(t)},\ldots,g_s^{(t)},t-1\bigr)}\rightarrow \frac{A[y_1,\ldots,y_r,t]}{\bigl(({c^{-1}})^{\deg(g_1)}g_1^{(t)},\ldots,({c^{-1}})^{\deg(g_s)}g_s^{(t)},{c^{-1}}(t-c)\bigl)}=\mathfrak{C}_{c},$$	
	hence (3) is satisfied.  
	
	To see that (1) holds, note that for each $1\leq i\leq r$ we have 
	$$g_i^{(t)}=g_i^{(0)}+t\cdot h_i$$
	where $g_i^{(0)}\in \F[y_1,\ldots,y_r]$ and $h_i\in A[y_1,\ldots,y_r,t]$ is some polynomial with coefficients in $\mathfrak{m}_A$.  Then we have the following string of $A$-module isomorphisms:
	\begin{align*}
	\mathfrak{C}/(t\cdot \mathfrak{C})= & \frac{A[y_1,\ldots,y_r,t]}{\bigl(g_1^{(t)},\ldots,g_r^{(t)}\bigr)+(t)}\\
	= & \frac{A[y_1,\ldots,y_r]}{\bigl(g_1^{(0)},\ldots,g_r^{(0)}\bigr)}\\
	= & A\otimes_{\F} \frac{\F[y_1,\ldots,y_r]}{\bigl(g_1^{(0)},\ldots,g_r^{(0)}\bigr)}\\
	\cong & A\otimes_{\F} \frac{A[y_1,\ldots,y_r]}{(g_1,\ldots,g_r)+\mathfrak{m}_A}\\
	= & A\otimes_{\F} C/(\mathfrak{m}_A\cdot C)\\
	\cong & A\otimes_{\F} B,
	\end{align*}
	which implies (1).
	 We will now show that $\mathfrak{C}$ is free over $A[t]$, hence flat over $\F[t]$. Since $B\cong\F[y_1,\ldots,y_r]/\bigl(g_1^{(0)},\ldots,g_s^{(0)}\bigr)$ there exist monomials $m_1,\ldots,m_b\in \F[y_1,\ldots,y_r]$ which form a $\F$-linear basis for $B$.  Consequently, the $m_1,\ldots,m_b$ also form an $A$ basis for  $C\cong\mathfrak{C}_1$, and hence also for $\mathfrak{C}_b$ for each non-zero $ b\in F$ (use the isomorphism $\theta_b$ above, and note that $\theta_b(m_i)=m_i$ for each $i$).  We claim that $m_1,\dots,m_b$ form an $A[t]$-module basis for $\mathfrak{C}$.  First, they generate $\mathfrak{C}/\mathfrak{m}_A\mathfrak{C}\cong B\otimes_{\F}\F[t]$ as a $\F[t]$-module, hence by Nakayama's lemma they must also generate $\mathfrak{C}$ as an $A[t]$-module.  Next, suppose by way of contradiction that there is a dependence relation over $A[t]$,
	$$\sum_{i=1}^N\sum_{j=1}^ba_{ij}t^im_j=0, \ \ \text{for some} \ \ a_{ij}\in A.$$
	Then for each $c\in\F$, we must have an $A$-dependence relation in $\mathfrak{C}_c$:
	$$\sum_{j=1}^b\left(\sum_{i=1}^Na_{ij}c^i\right)m_j=0.$$
	This implies that for each $j=1,\ldots,b$ we must have 
	$$\sum_{i=0}^Na_{ij}c^i=0$$
	for every $c\in\F$, and some fixed $a_{ij}\in A$.  But choosing $N$-different points $c_1,\ldots,c_N\in\F$, which is possible since $\F$ is infinite, we can solve the $(N+1)\times(N+1)$ nonsingular Vandermonde system to show that $a_{ij}=0$ for all $i,j$.  Therefore the $m_1,\ldots,m_b$ are $A[t]$-linearly independent, and hence $\mathfrak{C}$ is $A[t]$-free, and in particular $\F[t]$-flat.
\end{proof}\par
\begin{remark}[{\it Proof of Theorem \ref{thm:def} using Rees algebra}]\label{rem:Rees}
We now give a second proof, suggested by the referee, that works over any field $\F$; we use also \cite[\S 6.5]{Ei}.\vskip 0.2cm
Let $C$ be a free extension of $A$ with fibre $B$ as in Theorem \ref{thm:def}.  For each $i$ define the ideal $I_i\coloneqq A_{\geq i}\cdot C$; these ideals define a decreasing filtration $\mathcal{F}(C):I_0=C\supseteq I_1\supseteq I_2\supseteq \cdots$ for which the associated graded algebra is  
$$\operatorname{gr}_{\mathcal{F}(C)}(C)=C/I_1\oplus I_1/I_2\oplus\cdots =\oplus_{i\geq 0}I_i/I_{i+1}.$$

Note that $\operatorname{gr}_{\mathcal{F}(C)}(C)$ is naturally an algebra over the associated graded algebra 
\begin{align*}
\operatorname{gr}_{\mathcal{F}(A)}(A)&=A/A_{\geq 1}\oplus A_{\geq 1}/A_{\geq 2}\oplus\cdots\\
&\cong A_0\oplus A_1\oplus A_2\oplus\cdots \cong A.
\end{align*}
\begin{lemma}
	\label{lem:key} The algebra
	$\operatorname{gr}_{\mathcal{F}(C)}(C)$ is isomorphic to $A\otimes_{\F}B$ as $A$ algebra.
\end{lemma}
\begin{proof}
Note that besides the well defined $\F$-algebra map $\iota\colon A\rightarrow \operatorname{gr}_{\mathcal{F}(C)}(C)$ (which gives $\operatorname{gr}_{\mathcal{F}(C)}(C)$ its $A$-algebra structure), we also have a well defined $\F$-algebra map ${\sf s}\colon B\rightarrow C$ (since $C/I_1=C/\mathfrak{m}_AC\cong B$). Then by the universal property for tensor products, there is also a well defined $A$-algebra homomorphism $$\Phi=\iota\otimes{\sf s}\colon A\otimes_{\F} B\rightarrow \operatorname{gr}_{\mathcal{F}(C)}(C).$$  If $b_1,\ldots,b_r$ is a homogeneous $\F$-basis for $B$, and $a_1,\ldots,a_s$ is a homogeneous basis for $A$, then clearly for each $k$, 
\[\left\{\Phi(a_i\otimes b_j)=a_ib_j+I_{k+1}\mid\deg(a_i)=k\right\}\] 
is a basis for $I_k/I_{k+1}$, hence $\Phi$ is surjective.  Finally since $\dim_{\F}(C)=\dim_{\F}(A)\cdot\dim_{\F}(B)=\dim_{\F}(A\otimes_{\F}B)$, the homomorphism $\Phi$ must be an $A$-algebra isomorphism.
\end{proof}\par
Recall that the \emph{Rees algebra} $\mathcal{R}(C,\mathcal{F})$ for the ring $C$ with filtration $\mathcal{F}=\mathcal{F}(C):I_0\supseteq I_1\supseteq I_2\supseteq\cdots$, is the following $\F[t]$-subalgebra of $C[t,t^{-1}]$ (here we take $I_i=R$ for $i<0$)
$$\mathcal{R}(C,\mathcal{F})=\bigoplus_{i=-\infty}^{\infty}I_{i}\cdot t^{-i}\subset C[t,t^{-1}].$$
\begin{lemma}
	\label{lem:Rees}
	\begin{enumerate}
		\item $\mathcal{R}(C,\mathcal{F})$ is a flat $\F[t]$-algebra.
		\item $R(C,\mathcal{F})/t\cdot\mathcal{R}(C,\mathcal{F})\cong \operatorname{gr}_{\mathcal{F}}(C)$ as $A$-algebras.
		\item $\mathcal{R}(C,\mathcal{F})/(t-c)\cdot \mathcal{R}(C,\mathcal{F})\cong C$ as $A$-algebras, for every $0\neq c\in\F$.
	\end{enumerate}
\end{lemma}
\begin{proof}
	Note that $\mathcal{R}(C,\mathcal{F})\subset C[t,t^{-1}]$ is $\F[t]$-torsion free, hence it is $\F[t]$-flat, which is (1).  To see (2) note that for each integer $i$ there is a natural projection map $\pi_i\colon I_{i}\cdot t^{-i}\rightarrow I_i/I_{i+1}$ whose kernel is $I_{i+1}\cdot t^{-i}\subset I_i\cdot t^{-i}$; taking the direct sum of these maps gives the $A$-algebra map
	$$\oplus_{i\in\Z}\pi_i\colon \mathcal{R}(C,\mathcal{F})\rightarrow \operatorname{gr}_{\mathcal{F}}(C)$$ 
	with kernel 
	$$t\cdot\mathcal{R}(C,\mathcal{F})=\sum_{i=-\infty}^{\infty}I_{i}t^{-i+1}$$
	which gives (2).  Finally note that for every non-zero $c\in\F$, there is also a natural surjective homomorphism
	$$\xymatrixrowsep{.5pc}\xymatrix{\rho\colon\mathcal{R}(C,\mathcal{F})\ar[r] & C\\
		\sum\left(x_i\cdot t^{-i}\right)_{i\in\Z}\ar@{|->}[r] & \sum_{i\in\Z}x_i\cdot c^{-i}.}$$
	Finally note that if $\sum_{i\in\Z}x_ic^{-i}=0$ for some finite sum where each $x_i\in I_i\subset I_{i-1}$, then the difference of $\Z$-tuples 
	$$\left(x_it^{-i}\right)_{i\in\Z}-\left(x_ic^{-1}t^{-i+1}\right)_{i\in\Z}=(t-c)\left(\left(x_it^{-i+1}\right)_{i\in\Z}\right)$$
	is in the kernel and conversely.  Therefore $\ker(\rho)=(t-c)\mathcal{R}(C,\mathcal{F})$, which implies (3).
\end{proof}
\par
		The above argument shows that $C$ is a flat deformation of $A\otimes_{\F}B$ regardless of the field; it also works alike for standard or non-standard graded algebras.  We thank the referee for pointing to the stronger statement, and this proof.  A key observation is Lemma \ref{lem:key}.
	\end{remark}
\begin{theorem}\label{TPthm}
	Let $A$ be an Artinian graded algebra, let $C$ be a free extension of $A$ with fiber $B$, over an infinite field $\ \sf k$.  Let $P_{A\otimes_{\sf k} B}$ be the generic linear Jordan type of the actual tensor product algebra, and let $P_C$ be the generic linear Jordan type of the $A$-free extension $C$.  Then in the dominance partial order we have 
	$$P_C\geq P_{A\otimes_{\sf k} B}.$$
\end{theorem}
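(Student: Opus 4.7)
The plan is to invoke Lemma~\ref{lem:1parameter} and combine it with the upper-semicontinuity of $\dim\ker L_t^k$ in the polynomial family of operators $L_t$ on $C$. The lemma already packages the key geometric content: multiplication $\times\ell$ on $A\otimes_{\sf k}B$ appears as the special fibre $L_0$ of a one-parameter family, while for $t\ne 0$ the operator $L_t$ is similar to multiplication by a specific linear form $\ell_t \in C$.

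Assume first that $A_1$ and $B_1$ are nonzero so that Lemma~\ref{lem:1parameter} applies (the degenerate cases follow by similar, easier, arguments). Since ${\sf k}$ is infinite and $(A\otimes_{\sf k}B)_1 \cong A_1\oplus B_1$, I would choose $\ell_A\in A_1$ and $\ell_B\in B_1$ generic enough that the linear form $\ell:=\ell_A\otimes 1 + 1\otimes \ell_B$ realizes the generic linear Jordan type of $A\otimes_{\sf k}B$, that is $P_\ell = P_{A\otimes_{\sf k}B}$. Then I apply Lemma~\ref{lem:1parameter} to these choices. Diagram~\eqref{1comsqeq} shows that $L_0$ is conjugate via the ${\sf k}$-linear isomorphism $\Phi_{\sf s}$ to $\times\ell$ on $A\otimes_{\sf k}B$, so $P_{L_0} = P_\ell = P_{A\otimes_{\sf k}B}$. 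Diagram~\eqref{2comsqeq} shows that for $t\ne 0$ the map $L_t$ is conjugate via $\chi_t$ to $\times\ell_t$ on $C$; thus $P_{L_t} \le P_C$, since $\ell_t$ is a specific linear form of $C$ while $P_C$ is the generic linear Jordan type.

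The crux is the semicontinuity step. The entries of $L_t$ are polynomial in $t$, so for every $k\ge 1$ the rank $\mathrm{rank}(L_t^k)$ is lower semicontinuous in $t$, equivalently $\dim_{\sf k}\ker L_t^k$ is upper semicontinuous. Since any nilpotent operator $L$ satisfies $\sum_{i=1}^k (P_L^\vee)_i = \dim_{\sf k}\ker L^k$, it follows that for generic $t$,
\[
\sum_{i=1}^k (P_{L_t}^\vee)_i \;\le\; \sum_{i=1}^k (P_{L_0}^\vee)_i \quad\text{for every } k,
\]
i.e.\ $P_{L_t}^\vee \le P_{L_0}^\vee$ in dominance; taking conjugates reverses dominance, so $P_{L_0}\le P_{L_t}$. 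Stringing the inequalities together yields $P_{A\otimes_{\sf k}B} = P_{L_0} \le P_{L_t} \le P_C$ for generic $t\ne 0$, as claimed.

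The main subtlety is precisely this conjugation: semicontinuity naively says the Jordan type becomes ``larger'' at $t=0$ in the sense that more kernel dimension accumulates there, but in dominance order on the partitions themselves this \emph{reverses}, forcing $P_{L_0}$ to be \emph{smaller} than the generic $P_{L_t}$. That is exactly what makes the conclusion a lower bound on $P_C$ rather than an upper bound, and it is the step where one must pay careful attention to the direction of the inequality.
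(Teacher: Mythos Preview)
Your proof is correct and follows essentially the same approach as the paper: invoke Lemma~\ref{lem:1parameter}, identify $P_{L_0}=P_{A\otimes_{\sf k}B}$ and $P_{L_t}\le P_C$ for $t\ne 0$, and use semicontinuity of Jordan type in the one-parameter family to conclude. The only difference is expository: where the paper cites semicontinuity of Jordan type as a black box (Proposition~\ref{propdef:GenJorType} and \cite[Lemma~2.8]{IMM1}), you spell it out via lower semicontinuity of $\mathrm{rank}(L_t^k)$ and the identity $\sum_{i\le k}(P_L^\vee)_i=\dim\ker L^k$.
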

\begin{proof}
	Let $\ell=\ell_A\otimes 1+1\otimes\ell_B\in A\otimes_{\sf k} B$ be a linear form whose multiplication map has Jordan type partition $P_{A\otimes_{\sf k} B}$.  By Theorem \ref{thm:def} there is a 1-parameter family $L_t\colon C\rightarrow C$ $(t\in{\sf k}\cong{\mathbb A}^1)$ of degree one endomorphisms of $C$ such $L_0$ has the same Jordan type as the multiplication map $\times\ell\colon A\otimes_{\sf k} B\rightarrow A\otimes_{\sf k} B$, and such that $L_t, t\not=0$ has the same Jordan type as the multiplication map $\times\left(\iota(\ell_A)+t\Lambda\right)\colon C\rightarrow C$.  By the semicontinuity of Jordan type, e.g. Proposition \ref{propdef:GenJorType}, there is an open set $U\in\mathbb A^1$ containing $t=0$ such that the Jordan type $P_{L_t}\geq P_{L_0}$ for all $t\in U$.  Since the generic (linear) Jordan type $P_C$ is the maximal Jordan type occurring for $\ell\in C_1$
	we have, as desired, 
\begin{equation*}
P_C\geq P_{L_t}\geq P_{L_0}=P_{A\otimes_{\sf k} B}.
\end{equation*}  
\end{proof}\par

We can use Theorem \ref{TPthm} to give a new proof of the following result of T. Harima and J.~Watanabe, which they show using central simple modules.\footnote{We have slightly different notation, our $C$ is the $A$ in \cite[Theorem 6.1]{HW}. The definition in \cite{HW} of SL is the ``narrow strong Lefschetz'' of \cite[Definition 3.18]{H-W}, implying that the Hilbert function is symmetric and unimodal. Their statement of Theorem 6.1 in \cite{HW} is for $\cha {\sf k}=0$, but the proof in high enough characteristic $p$ is the same.}
\begin{theorem}\label{SL2thm}\cite[Theorem 6.1]{HW}. Suppose that $C$ is a free extension of $A$ with fiber $B$. Assume that $\cha {\sf k}=0$ or $\cha {\sf k}> j_A+j_B$, that the Hilbert functions of both $A$ and $B$ are symmetric, and that both $A$ and $ B$ are strong Lefschetz. Then $C$ is also strong Lefschetz. 
\end{theorem}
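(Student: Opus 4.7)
The plan is to reduce, via Theorem~\ref{TPthm}, to the analogous claim for the ordinary tensor product algebra $A\otimes_{\sf k}B$, and then invoke the classical fact that a tensor product of two SL algebras with symmetric Hilbert functions is again SL.

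First, I would note that any graded $\sf k$-linear section ${\sf s}:B\to C$ of $\pi$ induces a graded $A$-module isomorphism $\Phi_{\sf s}:A\otimes_{\sf k}B\to C$, so $H(C)=H(A\otimes_{\sf k}B)$ and in particular $H^\vee(C)=H^\vee(A\otimes_{\sf k}B)$. Combining Theorem~\ref{TPthm}, which gives $P_C\ge P_{A\otimes_{\sf k}B}$, with Proposition~\ref{prop:HFBound}, which gives $P_C\le H^\vee(C)$, it then suffices to prove that $A\otimes_{\sf k}B$ is itself SL, that is, $P_{A\otimes_{\sf k}B}=H^\vee(A\otimes_{\sf k}B)$.

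Next, I would fix SL linear forms $\ell_A\in A_1$, $\ell_B\in B_1$ and argue that $\ell:=\ell_A\otimes 1+1\otimes\ell_B$ is SL on $A\otimes_{\sf k}B$. The natural tool is $\mathfrak{sl}_2$ representation theory: under the SL hypothesis together with symmetry of the Hilbert function, multiplication by $\ell_A$, the centered grading operator $h_A$, and the lowering operator coming from the isomorphisms $\times\ell_A^{j_A-2i}:A_i\to A_{j_A-i}$ assemble into an $\mathfrak{sl}_2$-triple acting on $A$, and similarly on $B$. The tensor $\mathfrak{sl}_2$-action on $A\otimes_{\sf k}B$ then has nilpotent part $e_A\otimes 1+1\otimes e_B=\times\ell$, and a Clebsch--Gordan-type decomposition into irreducibles produces Jordan blocks whose sizes are precisely the column heights of the product Hilbert function, i.e.\ give the partition $H^\vee(A\otimes_{\sf k}B)$. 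Equivalently, decomposing each factor into Lefschetz strings under $\times\ell_A$ and $\times\ell_B$ respectively, one reduces to showing that on a tensor product of two single strings the map $\times\ell$ has Jordan type of the expected conjugate form; this is a direct verification via the binomial identities that underlie Clebsch--Gordan.

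The main obstacle is the characteristic restriction. The $\mathfrak{sl}_2$ machinery above (complete reducibility plus the Clebsch--Gordan decomposition for the weights that occur) is standard in characteristic zero, but in positive characteristic it requires the prime to exceed every weight appearing in the module. Since the weights on $A\otimes_{\sf k}B$ range up to $j_A+j_B$, the hypothesis $\cha{\sf k}=0$ or $\cha{\sf k}\ge j_A+j_B$ is precisely what is needed to make the argument go through, paralleling the role played by the same hypothesis in Lemma~\ref{heighttwolem}. Chaining everything, $P_C\ge P_{A\otimes_{\sf k}B}=H^\vee(A\otimes_{\sf k}B)=H^\vee(C)\ge P_C$, forcing equality, so $C$ is SL.
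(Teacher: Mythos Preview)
Your proposal is correct and follows essentially the same route as the paper: both reduce via Theorem~\ref{TPthm} and Proposition~\ref{prop:HFBound} to showing $A\otimes_{\sf k}B$ is SL, and both invoke the Clebsch--Gordan argument (which the paper cites from \cite[Theorem~3.10]{HW} and \cite[Corollary~3.6]{IMM1}, while you sketch the $\mathfrak{sl}_2$-triple construction directly) together with the characteristic hypothesis to conclude.
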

\begin{proof} Under the assumptions on $\cha {\sf k}$,  the Clebsch-Gordan formula applies, and shows that if $A, B$ are strong Lefschetz then the tensor product 
	$A\otimes_{\sf k} B$ is strong Lefschetz.\footnote{T. Harima and J.~Watanabe's \cite[Theorem~3.10]{HW}, also see \cite[Corollary 3.6]{IMM1}.} 
	Hence the generic linear Jordan type is maximal, i.e. $P_{A\otimes B}=H(A\otimes_{\sf k} B)^\vee$.  But since $C$ is a free extension of $A$ with fiber $B$, it must have the same Hilbert function $H(C)=H(A\otimes_{\sf k} B)$, and hence $H(C)^\vee=H(A\otimes_{\sf k}B)^\vee$.   By Proposition \ref{prop:HFBound}, we must have $P_C\leq H(C)^\vee$.  On the other hand Theorem \ref{TPthm} implies that $P_C\geq P_{A\otimes_{\sf k}B}=H(C)^\vee$, and hence we must have equality $P(C)=H(C)^\vee$, and hence $C$ must be SL.
\end{proof}\par

\section{Examples}
\subsection{Free Extensions in Invariant Theory}
Let $V={\sf k}^n$ be a finite dimensional vector space over ${\sf k}$, let $W\subset\Gl(V)$ be any finite group acting linearly on $V$, and let $R=\Sym(V^*)\cong {\sf k}[x_1,\ldots,x_n]$ be the algebra of polynomial functions on $V$ (here $V^*$ is the dual).  Then $W$ acts on $R$ in the usual way, i.e. $(w\cdot f)(v)=f(w^{-1}(v))$, and the set of $W$-invariant polynomials forms a subalgebra $R^W\subset R$ called the $W$-invariant subalgebra.  Let $\mathfrak{h}(W)\subset R$ denote the ideal generated by the non-constant polynomials in $R^W$.  The quotient algebra $R_W=\frac{R}{\mathfrak{h}(W)}$
is called the \emph{coinvariant algebra of $W$}; it is always a graded Artinian algebra.

For any subgroup $K\subset W$, the quotient algebra
$R^K_W=\frac{R^K}{\mathfrak{h}(W)\cap R^K}$
is called the \emph{relative coinvariant algebra for the pair $K\subset W$}.  The inclusion $\hat{\iota}\colon R^K\rightarrow R$ passes to a well defined map of quotient algebras 
$\iota\colon R^K_W\rightarrow R_W$.
We also have an inclusion of ideals $\mathfrak{h}(W)\subset\mathfrak{h}(K)$, and hence a natural surjection of quotient algebras $\pi\colon R_W\rightarrow R_K$.

We say that the invariant subalgebra $R^W$ or $R^K$ is \emph{polynomial} if it can be generated as an algebra by algebraically independent polynomials. 

\begin{lemma}
	\label{lem:RelCoin}
	If $R^K$ is polynomial, then $C=R_W$ is a free extension of $A=R_W^K$ with fiber $B=R_K$ via the maps $\iota$ and $\pi$.  Moreover, if $R^W$ is polynomial and $R_W$ is a free extension of $R^K_W$ with fiber $R_K$ via $\iota$ and $\pi$, then $R^K$ must be polynomial.  
\end{lemma}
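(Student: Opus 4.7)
The plan is to treat the two implications separately, both times relying on the following form of the Chevalley--Shephard--Todd theorem: for a finite group $G$ acting linearly on the polynomial ring $R$, the invariant ring $R^G$ is polynomial if and only if $R$ is a free $R^G$-module, and (when $|G|$ is invertible in $\sf k$) this is equivalent to $\dim_{\sf k} R_G = |G|$. I will assume throughout that the characteristic of $\sf k$ is coprime to $|W|$, so that the Reynolds averaging operators on $W$ and $K$ are available---this is implicit in the invariant-theoretic setting.

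For the forward direction, assume $R^K$ is polynomial, so that $R$ is a free $R^K$-module. The key identity is $\mathfrak h(W)\cap R^K = (R^W)_+\cdot R^K$: one inclusion is immediate, and the reverse follows from faithful flatness of $R$ over $R^K$ (for any ideal $J\subseteq R^K$ one has $RJ\cap R^K=J$), applied to $J=(R^W)_+ R^K$, using that $RJ=R\cdot(R^W)_+ = \mathfrak h(W)$. This identifies $R_W^K=R^K/J$, in particular making $\iota$ injective, and then
\[
R_W \;=\; R/\mathfrak h(W) \;=\; R/RJ \;=\; R\otimes_{R^K} R_W^K
\]
is a free $R_W^K$-module because $R$ is free over $R^K$. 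The coexactness condition $\ker\pi = \iota(\mathfrak m_{R_W^K})\cdot R_W$ is then immediate from $\mathfrak h(K) = R\cdot(R^K)_+$ modulo $\mathfrak h(W)$.

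For the converse, assume $R^W$ is polynomial and that $R_W$ is a free extension of $R_W^K$ with fiber $R_K$. Chevalley's theorem gives $R_W\cong {\sf k}[W]$ as a $W$-module, so restricting to $K$ and choosing coset representatives for $K\backslash W$ exhibits $\operatorname{Res}^W_K R_W$ as a direct sum of $[W:K]$ copies of the regular $K$-representation, whence $\dim_{\sf k}(R_W)^K = [W:K]$. The Reynolds decomposition $R=R^K\oplus\ker\rho^K$ descends modulo the $K$-stable ideal $\mathfrak h(W)$ to an isomorphism $R_W^K\cong (R_W)^K$, so $\dim R_W^K = |W|/|K|$. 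The free-extension dimension identity $\dim R_W = \dim R_W^K\cdot\dim R_K$ from Remark \ref{rem:FExt}(3), together with $\dim R_W=|W|$, forces $\dim R_K=|K|$, and hence $R^K$ is polynomial by Chevalley--Shephard--Todd.

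The main obstacle is the identification $R_W^K\cong (R_W)^K$ in the converse direction: it relies on the Reynolds averaging operator and hence on a coprimality hypothesis on the characteristic that is implicit rather than explicit in the statement. Everything else---the faithful-flatness computation, the tensor product identification, and the dimension count---is essentially formal once the Chevalley--Shephard--Todd equivalences are in hand.
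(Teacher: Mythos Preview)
Your forward direction is essentially the paper's argument, with the welcome addition that you make the identity $\mathfrak h(W)\cap R^K=(R^W)_+R^K$ explicit via faithful flatness of $R$ over $R^K$; the paper uses this identity silently when writing $R_W^K\cong R^K\otimes_{R^W}{\sf k}$.

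For the converse, your argument is correct (under your coprimality hypothesis) but takes a genuinely different route from the paper. The paper argues by contraposition and homological algebra: assuming $R^K$ is not polynomial, it builds the short exact sequence $0\to\mathcal K\to R^K\otimes_{\sf k}R_K\to R\to 0$ of $R^K$-modules with $\mathcal K\neq 0$, tensors with ${\sf k}$ over $R^W$, and uses $\mathrm{Tor}_1^{R^W}({\sf k},R)=0$ (freeness of $R$ over $R^W$) together with graded Nakayama to conclude that $R_W^K\otimes_{\sf k}R_K\to R_W$ has nonzero kernel, so $R_W$ cannot be $R_W^K$-free. Your approach instead computes $\dim_{\sf k}R_W^K$ directly by identifying it with $(R_W)^K$ via the Reynolds operator and invoking $R_W\cong{\sf k}[W]$ as a $W$-module, then finishes with a dimension count. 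What your approach buys is transparency---it is a clean counting argument with no Tor in sight. What the paper's approach buys is generality: it does not invoke Reynolds or the regular-representation isomorphism, so it does not require the non-modular hypothesis. Since the lemma as stated carries no restriction on the characteristic (and Remark~\ref{rem:Inv} treats the non-modular case only as a special instance), your added coprimality assumption is a genuine extra restriction rather than something implicit in the setup; the paper's homological proof avoids it.
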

\begin{proof}
	Assume first that $R^K$ is polynomial.  We must show that $\iota\colon R_W^K\rightarrow R_W$ makes $R_W$ into a free $A=R^K_W$ module, and that $\ker(\pi)=\iota(\mathfrak{m}_{A})\cdot R_W=(R^K_W)^+\cdot R_W$.  Note that an equivalence class $f+\mathfrak{h}(W)$ is in $\ker(\pi)$ if and only if $f\in \mathfrak{h}(K)$, which immediately implies that $\ker(\pi)=(R^K_W)^+\cdot R_W$.  To see that $R_W$ is a free module, recall, c.f. \cite[Corollary 6.7.13]{Sm1.5}, the fact that $R^K\subset R$ is polynomial if and only if $R$ is a free module over $R^K$.  Therefore $R_W\cong R\otimes_{R^W}{\sf k}$ is a free module over $R^K_W\cong R^K\otimes_{R^W}{\sf k}$.
	
	Assume $R^W$ is polynomial but that $R^K$ is not polynomial.  Consider the natural projection $\hat{\pi}\colon R\rightarrow R_K$.  By Nakayama's Lemma if $\hat{\sf s}\colon R_K\rightarrow R$ is any graded ${\sf k}$-linear section for $\pi$, the map $\hat{\Phi}_{\sf s}\colon R^K\otimes_{\sf k}R_K\rightarrow R$ is surjective; let $\mathcal{K}$ be its kernel so that we have a short exact sequence of $R^K$ modules
	\begin{equation}
	\label{eq:SESInv}
	\xymatrix{0\ar[r] & \mathcal{K}\ar[r] & R^K\otimes_{\F} R_K\ar[r]^-{\Phi_s} & R\ar[r] & 0\\}
	\end{equation}
	Note that since $R^K$ is not polynomial, the fact cited above implies that $\mathcal{K}$ is non-zero.  Applying ${\F}\otimes_{R^W}-$ to the exact sequence \eqref{eq:SESInv}, we get another exact sequence of $\F\otimes_{R^W}R^K\cong R^K_W$-modules
	\begin{equation}
	\label{eq:SES2Inv}\xymatrix{\mathrm{Tor}_1^{R^W}({\F},R)\ar[r] & {\F}\otimes_{R^W} \mathcal{K}\ar[r] & R_W^{K}\otimes_{\F} R_{K}\ar[r]^-{{\Phi}_s} & R_W\ar[r] & 0\\}
	\end{equation}
	
	Since $R^W$ is polynomial, $R$ is a free $R^W$-module hence $\mathrm{Tor}_1^{R^W}({\F},R)=0$.  Moreover since $\mathcal{K}\neq 0$, Nakayama's Lemma implies that ${\F}\otimes_{R^W}\mathcal{K}\cong \mathcal{K}/\mathfrak{h}_W\cap\mathcal{K}\neq 0$ as well.  This implies that the second map ${\Phi}_s=\iota\otimes{\sf s}$ in Sequence \eqref{eq:SES2Inv} is not an isomorphism.  On the other hand, we have seen already that $\ker(\pi\colon R^K_W\rightarrow R_W)=(R^K_W)^+\cdot R_W$ and $R_W/(R^K_W)^+\cdot R_W\cong R_K$, and hence by Nakayama's Lemma $\Phi_{\sf s}\colon R^K_W\otimes_{\F}R_K\rightarrow R_W$ is a free $R^K_W$-module cover of $R_W$ and so is an isomorphism if and only if $R_W$ is free.  We conclude that $R_W$ cannot be a free $R^K_W$-module in this case.  
\end{proof}

\begin{remark}
	\label{rem:Inv}
	In the non-modular case, meaning that $\lvert W\rvert \in\F^\ast$, we have $R^W$ is polynomial if and only if $W$ is generated by pseudo-reflections.  A pseudo-reflection is an invertible linear transformation $s\in\Gl(V)$ with finite order whose fixed point set is a hyperplane in $V$.  Therefore in the non-modular case, Lemma \ref{lem:RelCoin} says that for any fixed group $W$, the coinvariant algebra $R_W$ has a free extension structure for each reflection subgroup $K\subset W$.  A similar result to Lemma \ref{lem:RelCoin} was proved by L. Smith in the non-modular case \cite[Theorem 1]{Sm2}.
\end{remark}

\begin{example}
	\label{ex:SLCoin}
	Let ${\sf k}=\C$, and fix integers $r,n\geq 1$.  Let $W$ be the pseudo-reflection group $G(r,1,n)$, i.e. semi-direct product\small
	$$W=\left\{\left.\left(\begin{array}{ccc} \lambda_1 & \cdots & 0\\ \vdots & \ddots & \vdots\\ 0 & \cdots & \lambda_n\\ \end{array}\right)\right|\lambda_i^r=1\right\}\rtimes\mathfrak{S}_n.$$
	\normalsize
	In other words $W$ is the group of $n\times n$ ``$r$-colored permutation matrices'' whose elements are permutation matrices with non-zero entries arbitrary $r^{th}$ roots of unity.  $W$ acts on the polynomial ring $R={\sf k}[x_1,\ldots,x_n]$ and its invariants are generated by the ``elementary $r$-symmetric polynomials'' in $n$-variables, i.e.
	$$e_i(r,n)= e_i(x_1^r,\ldots,x_n^r)$$
	where $e_i(x_1,\ldots,x_n)$ is the $i^{th}$ elementary symmetric polynomial in the variables $x_1,\ldots,x_n$.  Therefore its coinvariant algebra is the graded Artinian complete intersection 
	$$R_W=\frac{{\sf k}[x_1,\ldots,x_n]}{(e_1(r,n),\ldots,e_n(r,n))}.$$
	As a reflection subgroup take $K=G(r,1,n-1)$, i.e. $K$ is the pointwise stabilizer subgroup of the last coordinate function $x_n$.  Then the $K$-coinvariants are
	$$R_K=\frac{{\sf k}[x_1,\ldots,x_n]}{(e_1(r,n-1),\ldots,e_{n-1}(r,n-1),x_n)}\cong \frac{{\sf k}[x_1,\ldots,x_{n-1}]}{(e_1(r,n-1),\ldots,e_{n-1}(r,n-1))}.$$
	Note that for each $1\leq i\leq n-1$ we have the relations
	$$e_i(r,n)=e_i(r,n-1)+x_n^r\cdot e_{i-1}(r,n-1).$$
	Thus, the $K\subset W$ relative coinvariants are 
	$$R^K_W=\frac{\F[e_1(r,n-1),\ldots,e_n(r,n-1),x_n]}{(e_1(r,n),\ldots,e_n(r,n))}\cong \frac{\F[y_1,\ldots,y_{n-1},x_n]}{(y_1+x_n^r, y_2+x_n^ry_1,\ldots,y_{n-2}+x_n^ry_{n-1},x_n^ry_{n-1})}$$
	or $R^K_W\cong \F[x_n]/(x_n^{nr})$.  We can use Theorem \ref{TPthm} to see that $R_W$ is SL, by induction on $n$.  The base case is $n=2$ and we have 
	$$R_W=\frac{\F[x_1,x_2]}{(x_1^r+x_2^r,x_1^rx_2^r)}$$
	which is SL by Lemma \ref{heighttwolem}.  By induction, $R_K$ is SL, and $R_W^K$ also obviously is SL.  It follows from Corollary \ref{SL2thm} that $R_W$ is SL.
\end{example}

	J. Watanabe et. al. \cite{H-W} used Corollary \ref{SL2thm} in a different way to show that $R_W$ is SL for $W=G(r,1,n)$.  In \cite{McD} Corollary \ref{SL2thm} was used in conjunction with Schubert calculus to show that $R_W$ is SL for every real reflection group.  The next example shows that there are complex reflection groups to which the above argument does not apply.  

\begin{example}
	\label{ex:G333}
	 Let ${\sf k}=\mathbb C$ and take $W$ be the complex reflection group $G(3,3,3)$, i.e.
	 $$W=\left\{\left.\left(\begin{array}{ccc}\lambda_1 & 0 & 0\\ 0 & \lambda_2 & 0\\ 0 & 0 & \lambda_3\\ \end{array}\right)\right|\lambda_i^3=1, \ \lambda_1\lambda_2\lambda_3=1\right\}\rtimes\mathfrak{S}_3.$$
	 In other words $W$ is the group of $3\times 3$ permutation matrices whose non-zero entries are $3^{rd}$ roots of unity with the additional proviso that their product is equal to one.  The coinvariant algebra for $W$ is 
	 $$R_W=\frac{\F[x,y,z]}{(x^3+y^3+z^3,x^3y^3+x^3z^3+y^3z^3,xyz)}.$$
	 Take $K=G(3,3,2)$, i.e. $K$ is the pointwise stabilizer subgroup of the $z$-coordinate.  The coinvariant algebra for $K$ is 
	 $$R_K=\frac{\F[x,y,z]}{(x^3+y^3,xy,z)}\cong \frac{\F[x,y]}{(x^3+y^3,xy)}.$$
	 The relative coinvariant algebra for $K\subset W$ is 
	 $$R_W^K=\frac{\F[x^3+y^3,xy,z]}{(x^3+y^3+z^3,x^3y^3+x^3z^3+y^3z^3,xyz)}\cong \frac{\F[a,b,c]}{(a+c^3,b^3+ac^3,bc)}\cong\frac{\F[b,c]}{(b^3-c^6,bc)}$$
	 where $\deg(a)=3$, $\deg(b)=2$, and $\deg(c)=1$.  Note that the Hilbert function for $R^K_W$ is $H(R^K_W)=(1,1,2,1,2,1,1)$.  The non-unimodality of $H(R^K_W)$ precludes the existence of a SL element.  On the other hand, as a local ring $\mathcal{A}=R^K_W$, and $\ell=b+c\in\mathfrak{m}$ is a SLJT element, the existence of which is guaranteed by Lemma \ref{heighttwolem}. 
\end{example}

In case the Hilbert functions of $A$ and $B$ are symmetric, one can show that both $A$ and $B$ are strong Lefschetz if and only if their tensor product $A\otimes_{\F}B$ is strong Lefschetz.  From this, one can deduce that in Example \ref{ex:G333}, the tensor product $A\otimes_{\F}B$, for $A=R^K_W$ and $B=R_K$, is not SL.  Hence Example \ref{ex:G333} shows that the inequality in Theorem \ref{TPthm} can be strict, i.e. for $C=R_W$, we have $P_C>P_{A\otimes_{\sf k}B}$.  It is tempting to think that this strictness may be an artifact of the non-standard grading on $A$.  The next Example \ref{SL2counterex} shows that a strict inequality $P_C>P_{A\otimes_{\F}B}$ can also occur where each of $A$, $B$, and $C$ has the standard grading, answering a question of J. Watanabe.

\subsection{Free Extensions via Macaulay Duality}

\subsubsection{Dual generator of an Artinian Gorenstein algebra.}
A local Artinian algebra $\mathcal{A}$ is called \emph{Gorenstein} if its socle $(0:\mathfrak{m})$ is a one-dimensional vector space over $\F$.  Let $\mathcal R={\sf k}\{x_1,\ldots ,x_n\}$ be a regular local ring of Krull dimension $n$.  Its \emph{divided power algebra} is an algebra $Q_R={\sf k}_{DP}[X_1,\ldots ,X_n]$ where $X_i^{[s]}\cdot X_i^{[t]}={\binom{s+t}{s}} X_i^{[s+t]}$; $\mathcal R$ acts on $\mathfrak D$ by contraction, i.e. $x_i^k\circ X_j^{[k']}= \delta_{i,j}X_j^{[k'-k]}$ for $k'\ge k$, extended multilinearly. Given an element $F\in Q_R$, define its annihilator ideal $\Ann(F)\subset\mathcal{R}$ as the ideal consisting of all elements of $\mathcal{R}$ which annihilate $F$.  Then the quotient $\mathcal{A}=\mathcal{R}/\Ann(F)$ is a local Artinian Gorenstein algebra.  The module $\hat{\mathcal A}=\mathcal R\circ {F}$ is the Macaulay dual of $\mathcal A$, equivalent to the Macaulay \emph{inverse system} of $\mathcal A$.\footnote{F.H.S. Macaulay used the notation $x_i^{-s}$ for the element $X_i^{[s]}$ in $Q_R$.} 
The \emph{socle} of $\mathcal A$ is $\Soc(\mathcal A)=(0:{\mathfrak m_{\mathcal A}})\subset \mathcal A$, the unique minimal non-zero ideal of $\mathcal A$, and $\dim_{\sf k}\Soc(\mathcal A)=1$. Letting $\mathfrak m_{\mathcal A}$ be the maximal ideal of $\mathcal A$, we have $\Soc (\mathcal A)={\mathfrak m_{\mathcal A}}^{j_\A} \mathcal A\not=0$ and ${\mathfrak m_{\mathcal A}}^{j_\A+1}\mathcal A=0$.
Then we have (\cite[\S 60-63]{Mac}, \cite[Lemma 1.1]{I1}, or, in the graded case, \cite[Lemma 1.1.1]{MS})
\begin{lemma}\label{dualgenlem} i. Assume that $\mathcal A=\mathcal R/I$ is Artinian Gorenstein of socle degree $j$. Then there is a degree-$j$ element ${F}\in Q_R$ such that $I=\Ann {F}$. Furthermore ${F}$ is uniquely determined up to action of a differential unit $u\in \mathcal R$: that is 
	\begin{equation}
	\Ann F=\Ann (u\circ {F}); \text { and } \Ann\  {F}= \Ann\  {G}\Leftrightarrow {G}=u\circ {F} \text { for some unit } u\in \mathcal R.
	\end{equation} The $\mathcal R$-module $(\Ann F)^\perp=\{h\in Q_R$ such that $(\Ann F)\circ h=0\}$ satisfies  $(\Ann F)^\perp =R\circ F$. \par
	ii. Denote by $\phi:\Soc(\mathcal A)\to {\sf k}$ a fixed non-trivial isomorphism, and define the pairing $\langle \cdot,\cdot\rangle_\phi$ on $\mathcal A\times \mathcal A$ by $\langle (a,b)\rangle_\phi=\phi(ab)$.  Then the pairing $\langle (\cdot,\cdot)\rangle_\phi$ is an exact pairing on $ \mathcal A$, for which $({{\mathfrak m}}^i)^\perp=(0:\mathfrak {m_{\mathcal A}}^i)$.  We have $0:{\mathfrak m_{\mathcal A}}^i=\Ann ({\mathfrak m_{\mathcal A}}^i\circ {F})$.
	Also $\Ann (\ell^i\circ {F})=I:\ell^i$.
\end{lemma}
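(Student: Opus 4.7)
The plan is to invoke Macaulay's inverse system duality: for $I \subset \mathcal{R}$ of finite colength, $I^\perp \subset \mathfrak{D}$ is a finite-dimensional $\mathcal{R}$-submodule with $\dim_\F I^\perp = \dim_\F \mathcal{A}$, and the pairing $\mathcal{A} \times I^\perp \to \F$, $(a,h) \mapsto (a \circ h)|_0$, is perfect. Since $\mathfrak{m}_\mathcal{R}^{j+1} \subset I$, every element of $I^\perp$ has top degree at most $j$.

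For part (i), I would first compute the perpendicular in $\mathcal{A}$ of $\mathfrak{m}_\mathcal{R} \circ I^\perp \subset I^\perp$ under this pairing: using the identity $(a \circ (m \circ h))|_0 = ((am) \circ h)|_0$ and nondegeneracy, the perpendicular equals $\{a \in \mathcal{A} : a\,\mathfrak{m}_\mathcal{A} = 0\} = \Soc(\mathcal{A})$, which is one-dimensional by the Gorenstein hypothesis. Hence $\dim I^\perp/(\mathfrak{m}_\mathcal{R} \circ I^\perp) = 1$, and since $\mathfrak{m}_\mathcal{R}$ acts nilpotently on the finite-dimensional module $I^\perp$, Nakayama's lemma forces $I^\perp$ to be $\mathcal{R}$-cyclic, generated by any $\mathsf{f}$ whose class is nonzero in this quotient; the perfect pairing between degree-$j$ components identifies $(I^\perp)_j \cong \Soc(\mathcal{A})^*$, so $\mathsf{f}$ can be chosen of top degree $j$. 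From $I^\perp \cong \mathcal{R}/\Ann(\mathsf{f})$ and $I \subset \Ann(\mathsf{f})$, a dimension count gives $\Ann(\mathsf{f}) = I$. For uniqueness, if $\mathsf{g}$ is another generator, write $\mathsf{g} = u \circ \mathsf{f}$ and $\mathsf{f} = v \circ \mathsf{g}$; then $(1 - vu) \circ \mathsf{f} = 0$, so $1 - vu \in \Ann(\mathsf{f}) = I \subset \mathfrak{m}_\mathcal{R}$, forcing $u$ and $v$ to be units; conversely, any unit preserves the cyclic module.

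For part (ii), exactness of $\langle\,\cdot\,,\,\cdot\,\rangle_\phi$ follows from $\Soc(\mathcal{A})$ being the unique minimal nonzero ideal: for any nonzero $a \in \mathcal{A}$, the ideal $(a)$ contains $\Soc(\mathcal{A})$, so some $b$ gives $0 \neq ab \in \Soc(\mathcal{A})$ with $\phi(ab) \neq 0$. The identity $(\mathfrak{m}_\mathcal{A}^i)^\perp = (0 : \mathfrak{m}_\mathcal{A}^i)$ has the inclusion $\supset$ tautological; for $\subset$, if $b \in (\mathfrak{m}_\mathcal{A}^i)^\perp$ with $b\,\mathfrak{m}_\mathcal{A}^i \neq 0$, exactness provides $c$ with $\phi(b\,\mathfrak{m}_\mathcal{A}^i c) \neq 0$, contradicting $b \in (\mathfrak{m}_\mathcal{A}^i)^\perp$ since $\mathfrak{m}_\mathcal{A}^i c \subset \mathfrak{m}_\mathcal{A}^i$. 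The remaining identities $0 : \mathfrak{m}_\mathcal{A}^i = \Ann(\mathfrak{m}_\mathcal{A}^i \circ \mathsf{f})$ and $\Ann(\ell^i \circ \mathsf{f}) = I : \ell^i$ are associativity rearrangements: $r \circ (\ell^i \circ \mathsf{f}) = (r\ell^i) \circ \mathsf{f} = 0$ iff $r\ell^i \in \Ann(\mathsf{f}) = I$ iff $r \in I : \ell^i$.

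The main obstacle is the identification $I^\perp/(\mathfrak{m}_\mathcal{R} \circ I^\perp) \cong \Soc(\mathcal{A})^*$ underlying the cyclicity argument in part (i); this is precisely where the Gorenstein (one-dimensional socle) hypothesis is essential, and requires carefully tracing through the Macaulay perfect pairing together with a Nakayama step for the finite-dimensional $\mathcal{R}$-module $I^\perp$. Once cyclicity is established, the remaining assertions — uniqueness up to differential unit, exactness of the Gorenstein pairing, and the annihilator identities — follow by routine manipulations within the cyclic-generator formalism.
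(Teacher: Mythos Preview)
The paper does not actually prove this lemma; it only cites the classical references \cite[\S 60--63]{Mac}, \cite[Lemma~1.1]{I1}, and \cite[Lemma~1.1.1]{MS}. Your argument is correct and is essentially the standard one found in those sources: identify $I^\perp/(\mathfrak{m}_\mathcal{R}\circ I^\perp)$ with $\Soc(\mathcal{A})^*$ via the Macaulay pairing, use the Gorenstein hypothesis to get one-dimensionality, apply Nakayama to obtain cyclicity of $I^\perp$, and then read off the remaining statements from associativity of the contraction action.

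Two minor points of polish. First, in part~(i) your sentence ``$\mathsf{f}$ can be chosen of top degree $j$'' undersells the situation: \emph{every} cyclic generator has top degree exactly $j$, since $\mathfrak{m}_\mathcal{R}^j\not\subset I$ forces $\mathfrak{m}_\mathcal{R}^j\circ\mathsf{f}\neq 0$. Second, in part~(ii) the expression $\Ann(\mathfrak{m}_\mathcal{A}^i\circ\mathsf{f})$ literally mixes elements of $\mathcal{A}$ acting on $\mathfrak{D}$; it would be cleaner to phrase your associativity computation in $\mathcal{R}$ first (noting $I\circ\mathsf{f}=0$ makes the action of $\mathfrak{m}_\mathcal{A}^i$ well defined) and then pass to $\mathcal{A}$, so that both sides of $0:\mathfrak{m}_\mathcal{A}^i=\Ann(\mathfrak{m}_\mathcal{A}^i\circ\mathsf{f})/I$ live in the same ring. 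These are cosmetic; the mathematics is sound.
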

When $\mathcal A=\mathcal R/ I$ is a local ring then in general $F$ is not homogeneous.  When $\mathcal{A}$ is (possibly non-standard) graded we will write $A$ for $\mathcal A$: then the dual generator ${F}\in Q_R$ may be taken homogeneous, and it is unique up to non-zero scalar multiple.

{J. Watanabe asked whether there is a converse to Theorem \ref{SL2thm} if we assume that each of $A,B,C$ are standard graded.\footnote{Recall that T. Harima and J. Watanabe gave a counterexample to the converse of Theorem \ref{SL2thm} when $A$ is allowed to have non-standard grading \cite[Example 6.3]{HW2}.}\vskip 0.2cm\noindent
\begin{question}\label{junzoquest} Assume that the $A$-free extension $C$ with fiber $B$ is strong Lefschetz, and that $A,B,C$ are standard graded. Can we conclude that $A,B$ are SL?
\end{question}
We will show that the answer to the Question \ref{junzoquest} is ``No'' in Example \ref{SL2counterex}. In order to show this example we prove a result about freeness of extensions $C$ over the ring $A={\sf k}[t]/(t^{m+1})$.}\par
Let $R={\F}[x_1,\ldots,x_n]$ be a standard graded polynomial ring, let
$I_B\subset R$ be a homogeneous ideal of finite colength such that the quotient $B=R/I_B$ is a graded Artinian Gorenstein algebra with socle degree $j_B$. Let $Q_R={\F}[X_1,\ldots,X_n]$ be the divided power algebra corresponding to $R$, and let $F_B\in (Q_R)_{j_B}$ be a (homogeneous) Macaulay dual generator for $B$ as in Lemma~\ref{dualgenlem}. Set
$A={\F}[t]/(t^{m+1})$, and consider $F_A=T^{m}$, a Macaulay dual generator for $A$ in ${\F}[T]$. Let $S={\F}[x_1,\ldots,x_n,t]=R[t]$ and set $Q_S={\F}[X_1,\ldots,X_n,T]$, the corresponding ring of divided powers.
\begin{lemma}\label{specialdualgen-lem}
	Let $G\in Q_R$ be a homogeneous polynomial of degree $\deg G=j_B+m$, and consider the polynomial in $F\in Q_S$ defined by
	\[
	F=T^{[m]}\cdot F_B+G.
	\]
	Then $F$ is the Macaulay dual generator of a free extension with base $A$ and fiber $B$ if and only if $(I_B)^2\circ G = 0$.
\end{lemma}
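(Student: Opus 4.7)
The plan is to compute the annihilator $\Ann_S(F)$ explicitly and then apply the criterion of Remarks~\ref{rem:FExt}(3): $C$ is a free extension of $A$ with fiber $B$ iff $\pi:C\to B$ is surjective, $\ker\pi = tC$, and $\dim_{\sf k} C = (m+1)\dim_{\sf k} B$. The basic computation is that for $p=\sum_{i=0}^m t^i p_i\in S$ with $p_i\in R$,
\[
p\circ F \;=\; \sum_{i=0}^m T^{[m-i]}\,(p_i\circ F_B)\;+\;p_0\circ G,
\]
since $t$ kills $G\in Q_R$ and satisfies $t\circ T^{[j]}=T^{[j-1]}$. In particular $t^{m+1}\circ F=0$, so $\iota:A\to C$ sending $t\mapsto t$ is well defined.

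Reading off the $T^{[k]}$-coefficients, $p\in\Ann(F)$ iff $p_0,\dots,p_{m-1}\in I_B$ and $p_m\circ F_B + p_0\circ G = 0$. Since every element of $\Ann(F)$ has $p_0\in I_B$, the map $\pi:C\to R/I_B=B$, $p\mapsto p|_{t=0}\bmod I_B$, is well defined and surjective. The equality $\ker\pi = tC$ requires that every $p_0\in I_B$ extend to some element of $\Ann(F)$, which demands $-p_0\circ G\in R\circ F_B$ (so it can be realized as $p_m\circ F_B$). By Macaulay duality (Lemma~\ref{dualgenlem}), $y\in Q_R$ lies in $R\circ F_B$ iff $I_B\circ y = 0$; hence $p_0\circ G\in R\circ F_B$ iff $(I_B\,p_0)\circ G = 0$. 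Thus $\ker\pi = tC$ precisely when $(I_B)^2\circ G = 0$.

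For the dimension, I would filter $W=S\circ F\subseteq Q_S$ by $T$-degree from above. For $k\in\{1,\ldots,m\}$ the $k$-th graded piece is the image in $Q_R\cdot T^{[k]}$ of those $p\circ F$ with $p_0,\ldots,p_{m-k-1}\in I_B$, namely $R\circ F_B$ (dimension $\dim B$). The $T^{[0]}$-graded piece consists of $p\circ F$ with $p_0,\ldots,p_{m-1}\in I_B$ and equals $R\circ F_B + I_B\circ G$. Adding these,
\[
\dim_{\sf k} C \;=\; m\dim_{\sf k} B + \dim_{\sf k}(R\circ F_B + I_B\circ G),
\]
which equals $(m+1)\dim_{\sf k} B$ iff $I_B\circ G\subseteq R\circ F_B$, again iff $(I_B)^2\circ G=0$. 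Combining the two analyses with Remarks~\ref{rem:FExt}(3) yields the stated equivalence in both directions.

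The main subtlety, and the reason the condition appears as $(I_B)^2$ rather than merely $I_B$, is that $p_0$ plays a dual role in the description of $\Ann(F)$: it supplies both the $T^{[m]}$-coefficient $p_0\circ F_B$ and the $T^{[0]}$-correction $p_0\circ G$. The requirement that both constraints be simultaneously solvable for every $p_0\in I_B$—i.e.~that the $F_B$-component vanish while the $G$-correction be absorbed by a choice of $p_m$—is exactly what forces $I_B\cdot(I_B\circ G)=0$.
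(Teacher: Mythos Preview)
Your proof is correct and follows essentially the same strategy as the paper: set up $\iota$ and $\pi$, verify coexactness at $C$ (i.e.\ $\ker\pi=tC$) holds iff $(I_B)^2\circ G=0$, and then check the dimension criterion of Remarks~\ref{rem:FExt}(3). The one noteworthy variation is in the dimension count. The paper filters $C$ by powers of $t$ and shows by induction that each $m_t\colon t^iC/t^{i+1}C\to t^{i+1}C/t^{i+2}C$ is an isomorphism (under the hypothesis $(I_B)^2\circ G=0$), whereas you filter the dual module $W=S\circ F$ by $T$-degree and read off the graded pieces directly. Your computation has the mild advantage of giving $\dim_{\sf k}C=m\dim_{\sf k}B+\dim_{\sf k}(R\circ F_B+I_B\circ G)$ unconditionally, so the equivalence $\dim_{\sf k}C=(m+1)\dim_{\sf k}B\iff (I_B)^2\circ G=0$ falls out in one step, rather than being split across the two implications as in the paper.
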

\begin{proof} We first show $\Leftarrow$, that $(I_B)^2\circ G=0$ implies $C$ is an $A$-free extension.
	Let ${C=S/I_C}$, where ${I_C=\Ann_SF}$ and assume  $I_B^2\circ G=0$. Consider the projection map ${\hat{\pi}:S\rightarrow R}$ defined by ${x_i\mapsto x_i}$ and
	${t\mapsto 0}$. Let ${f\in I_C}$, where 
	\begin{equation}\label{feqn}
	{f=\sum_0^k t^if_i=f_0+tf_1+\cdots+t^kf_k},\, \text { with } {f_i\in R}.
	\end{equation}
	We assume ${k\ge m}$, as we do not require ${f_k\ne0}$. Since \[
	0 = f\circ F = T^{[m]}f_0\circ F_B + T^{[m-1]}f_1\circ F_B + \cdots +
	Tf_{m-1}\circ F_B
	+f_m\circ F_B + f_0\circ G, \]
	we must have ${f_0\circ F_B = 0}$, so ${\hat{\pi}(f)=f_0\in I_B}$. This shows that $\hat{\pi}$ yields a morphism ${\pi:C\to B}$, which is surjective, by construction.
	Also, the natural inclusion $\hat{\iota}\colon {\F}[t]\rightarrow \hat{R}$ passes to a map $\iota\colon A\rightarrow C$, since ${t^{m+1}\circ F=0}$.\par
	We now wish to show that ${\ker\pi=(\iota(A)_+)C}$. The inclusion ${(\iota(A)_+)C\subseteq\ker\pi}$ is immediate. For the other inclusion, consider ${{\sf f}\in\ker\pi}$, with ${{\sf f}={\sf f}_0+t{\sf f}_1+\cdots+t^m{\sf f}_m}$, ${{\sf f}_i\in R}$ (we do not need higher degree in $t$, since ${t^{m+1}\in I_C}$). We have $0=\pi({\sf f})=\overline{{\sf f}}_0\in B$, which implies ${\sf f}_0\in I_B$. Since ${(I_B)^2\circ G = 0}$, we have ${I_B\circ G\subseteq R\circ F_B}$, so there exists ${g_m\in R}$ such that ${{\sf f}_0\circ G = g_m\circ F_B}$. Let ${g=t{\sf f}_1+\cdots+t^m({\sf f}_m+g_m)\in(t)\subset S}$. Then ${{\sf f}-g={\sf f}_0-t^mg_m\in I_C}$ as ${\sf{f}}_0\in I_B$, and
	$$({\sf f}-g)\circ F=({\sf f}_0-t^mg_m)\circ  (T^{[m]}F_B+G)=T^{[m]}({\sf f}_0\circ F_B)+{\sf f}_0\circ G-g_m\circ F_B=0.$$
	Hence, as an element of $C$, we have ${\sf f}\in \ker\pi$ implies ${{\sf f}\in (\iota(A)_+)C}$, hence the sequence  $\xymatrix{{\sf k}\ar[r] & A\ar[r]^-\iota & C\ar[r]^-\pi & B\ar[r] & {\sf k}}$ is coexact at $C$; the coexactness at $A$ and $B$ are obvious. \par	
	To complete the proof of $\Leftarrow$ we need to show \vskip 0.2cm\noindent
	{\bf Claim.} We have equality $\dim_{\sf k}C=\dim_{\sf k}(A)\cdot \dim_{\sf k}(B)=(m+1)\cdot\dim_{\sf k}(B)$.
	 This implies that $C$ is a free $A$ module via the inclusion $\iota\colon A\rightarrow C$. \vskip 0.2cm\noindent
	{\it Proof of Claim.} Since $C\supset tC\supset t^2C \cdots  \supset t^mC\subset t^{m+1}C=0$ is a filtration of $C$, we have that as vector space $C\cong_{\sf k}\oplus_{i=0}^m t^iC/(t^{i+1}C)$. We have shown above that $\pi$ is surjective with $\ker\pi=tC$, and that  $ {{\sf f}={\sf f}_0+t{\sf f}_1+\cdots+t^m{\sf f}_m}$ satisfies ${\sf f}\in\ker\pi$ implies ${\sf f}_0\in I_B$. Thus we have $C/\ker\pi\cong C/tC\cong B$. Assume by way of induction that for an integer $i\in [0,m-1]$ we have $t^iC/t^{i+1}C\cong B$, and consider the homomorphism $m_t$ induced by the multiplication $ c\to  t\cdot c$
	\begin{equation}\label{inducteq} 
	m_t:  t^iC/t^{i+1}C\to t^{i+1}C/t^{i+2}C.
	\end{equation}
	Evidently $m_t$ is surjective. We now show $m_t$ is injective. Suppose that for an $\overline{\alpha} \in t^iC/(t^{i+1}C)$ we have $m_t(\overline{\alpha})=0$; let $\alpha=t^i{{\sf c}_0}\in t^iC$ be a representative of $\overline{\alpha}.$  
	Then there exists ${\sf c}_1\in C$ for which $t^{i+1}{\sf c}=t^{i+2}{\sf c}_1$; in particular then we must have $t^m{\sf c}_0=0$, which implies that $0=(t^m{{\sf c}_0})\circ F={\sf c}_0\circ F_B$, hence ${\sf c}_0\in I_B$, implying $t^i{\sf c}_0\in t^{i+1}C$ implying $\overline{\alpha}=0$. Hence $m_t$ is an isomorphism. This completes the induction step. We have shown equation \eqref{inducteq} for $i=0$, thus, by induction we have $\dim_{\sf k}t^iC/t^{i+1}C=\dim_{\sf k}B$ for $0\le i\le m$, implying that $\dim_{\sf k} C=(m+1)\dim_{\sf k}(B)$. This completes the proof of the claim,
	and the proof that $(I_B)^2\circ G = 0$ implies that $C$ is a free extension of $A$ with fiber $B$.\par
	
	We now prove $\Rightarrow$. Assume that $C$ is an $A$-free extension. Then ${\ker\pi=(\iota(A)_+)C}$. Here ${\sf f}=\sum_0^k t^i{\sf f}_i\in \ker \pi \Leftrightarrow {\sf f}_0\in I_B$ so $I_B\subset tC$. So there is $h\in C$ so $({\sf f}_0-th)\circ F=0$, but $({\sf f}_0-th)\circ F=-th\circ (T^{[m]}F_B)+{\sf f}_0\circ G$, so we have the highest degree term $t^{m-1}h_0$ of $h$ satisfies $h_0\circ F_B={\sf f}_0\circ G$, so $I_B\circ ({\sf f}_0\circ G)=I_B\circ (h_0\circ F_B)=0$. Since this occurs for all ${\sf f}_0\in I_B$ we have  $(I_B)^2\circ G=0$. This completes the proof of the Theorem.\footnote{This result is generalized in \cite[Theorem 2.1]{IMM2}.}
\end{proof}\par
\begin{remark} Lemma \ref{specialdualgen-lem} might also be derived from results appearing in the papers of J.~Elias and M. E. Rossi \cite{ElRo} or J. Jelisiejew \cite{Je}.  For example in terms of the paper \cite{ElRo}, the sequence $H_i=T^{[i]}F_B$ for $0\leq i\leq m-1$ and $H_m=T^{[m]}F_B+G$ forms a ``truncated'' $G_1$-admissible system generating an $S$-submodule of $Q_S$ which one could then show corresponds to a flat extension over $A=\F[t]/(t^{m+1})$ using an argument similar to the proof of their \cite[Theorem 3.8]{ElRo}.  We thank the referee for pointing out this connection. 
	
\end{remark}
We below will use the notation $m\otimes n$ to denote the generic Jordan type $(m+n-1,m+n-3,\ldots )$ of ${\sf k}[x,y]/(x^m,y^n)$.

\begin{example}[$C$ is an $A$-free extension, $C$ is SL, but $B$ is not SL]
\label{SL2counterex}
Let $\sf k$ be an infinite field of characteristic zero or
characteristic $p\ge 5$. Take $R={\sf k}[x,y,z,u,v]$, $S=R[t]$, let
$A={\sf k}[t]/(t^2)$ of Hilbert function $H(A)=(1,1)$ and let $B=R
/I_B$, $I_B=\Ann F_B$, $F_B=(XU^{[2]}+YUV+ZV^{[2]})$, an idealization of
${\sf k}[u,v]/(u,v)^2$. Then $H(B)=(1,5,5,1)$,
$I_B=\bigl((x,y,z)^2,uy-vz,ux-vy,uz,vx\bigr)$, and it is
straightforward to see that the Jordan type of $B$ is
$J_B=(4,2,2,2,1,1)$, so $B$ is not strong Lefschetz, and, since it has
symmetric Hilbert function, also cannot have an element of strong
Lefschetz Jordan type.\footnote{The cubic defining $B$ was studied by
U.~Perazzo in 1900, see \cite[Example 7.5.1, Theorem 7.6.8]{Ru}.} Let
$C=S/I$, $I=\Ann F$ where $F=TF_B+G$, $G=X^{[2]}UV + XYV^{[2]}$. It is
straightforward to verify that $(I_B)^2\circ G=0$, so $C$ is an $A$-free
extension with fiber $B$ by Lemma \ref{specialdualgen-lem}, and we have
$H(C)=(1,6,10,6,1)$.  A calculation in Macaulay2 shows that the (usual)
Hessian of $F$ is non-zero, hence for a generic linear form $\ell\in S$
we have the multiplication $\ell^2: C_1\to C_3$ is an isomorphism.
Evidently, for a generic $\ell$,  we have $\ell^4\not=0$.\footnote{in general, for any standard graded Artinian algebra $S$ of socle degree $j$ and any sufficiently general linear form $\ell$ we always have $\ell^j\neq 0$ in $S$, as the powers
of linear forms span $S_j$ when $\cha {\sf k}=0$ or is greater than
$j$.} Therefore, $P_C=(5,3^5,1^4)$, so $C$ is strong Lefschetz.  As an example, we can check that for $\ell=x+u+v$, we have
\begin{align*}
\ell^2x\circ F&=2X+2U+2V &\ell^2y\circ F&=2V+2T\\
\ell^2z\circ F&=T &\ell^2u\circ F&=2X+V+2T\\
\ell^2v\circ F&=2X+2Y+U &\ell^2t\circ F&=X+2Y+Z+2U
\end{align*}
and since these are linearly independent in the dual space, so are
$\ell^2x,\,\ell^2y,\,\ell^2z,\,\ell^2u,\,\ell^2v,\,\ell^2t$ linearly independent in $C_3$.
Therefore $\times\ell^2:C_1\to C_3$ is an isomorphism. Furthermore,
$\ell^4=12x^2uv\ne0$:  these verify that $\ell$ is a SL element for $C$.\par
Note that here the Jordan type of $A\otimes B$ is $2\otimes
(4+2^3+1^2)=2\otimes 4+3(2\otimes 2)+2(2\otimes 1)=(5,3)\cup 3(3,1)\cup
(2,2)=(5,3^4,2^2,1^3)$, thus $P_C>P_{A\otimes B}$ and $P_C$ covers
$P_{A\otimes B}$ in the dominance partial order.
\end{example}
\begin{ack}
	The first and third author appreciate conversations with participants of the informal work group on Jordan type, and others at the workshop ``Lefschetz Properties in Geometry, Algebra, and Combinatorics'' at Institute Mittag Leffler in July, 2017. We appreciate comments and answers to our questions by Larry Smith, Junzo Watanabe and Alexandra Seceleanu. We thank Oana Veliche, Ivan Martino, Jerzy Weyman, Emre Sen, and Shujian Chen for their comments. We thank the referee for cogent and helpful comments including an improved statement and proof for Theorem \ref{thm:def}. \par
The second author was partially supported by CIMA -- Centro de Investiga\c{c}\~{a}o em Mate\-m\'{a}tica e Aplica\c{c}\~{o}es, Universidade de \'{E}vora, project UID/MAT/04674/2019 (Funda\c{c}\~{a}o para a Ci\^{e}ncia e Tecnologia). 
\end{ack}\small

\end{document}